\newtheorem{theorem}{Theorem}[section]
\newtheorem{remark}[theorem]{Remark}
\theoremstyle{plain} 
\newtheorem{thm}{Theorem}[section]
\newtheorem{defn}[thm]{Definition}
\newtheorem{ass}[thm]{Assumption} 
\numberwithin{equation}{section}
\title{Feynman-Kac formula for BSDEs with jumps and time delayed generators associated to path-dependent nonlinear Kolmogorov equations
}
\author{
  Luca Persio \\
  Department of Computer Science\\
  University of Verona \\
  Strada le Grazie 15, Verona, 37134, Italy\\
  \texttt{luca.dipersio@univr.it} \\
  \And
   Matteo Garbelli  \\
  Department of Mathematics \\
  University of Trento \\
  Via Sommarive 14, Povo (Trento), 38123, Italy\\
  \texttt{matteo.garbelli@unitn.it} \\
   \And
  Adrian Z\u{a}linescu \\
  Faculty of Computer Science \\
  Alexandru Ioan Cuza University \\
  General Berhtelot Street 16,
Iasi, 700483, Romania\\
  \texttt{adrian.zalinescu@info.uaic.ro} \\
}
\begin{document}
\maketitle

\begin{abstract}
We consider a system of Forward Backward Stochastic Differential Equations (FBSDEs), with time delayed generator and driven by Lévy-type noise.
We establish a non-linear Feynman–Kac representation formula associating the solution given by the FBSDEs-system to the solution of a path dependent nonlinear Kolmogorov equation with both delay and jumps.
Obtained results are then applied to study a generalization of the so-called {\it Large Investor Problem} where the stock price evolves according to a jump-diffusion dynamic.
\end{abstract}

\keywords{Feynman-Kac formula
\and non linear BSDE
\and jump-diffusion
\and path-dependence}

\section{Introduction}

Stochastic delay differential equations are derived as a natural generalisation of stochastic ordinary differential equations by allowing coefficients' evolution to depend not only on the present state, but also on past values.

In this article we analyze a stochastic process described by a system of Forward-Backward Stochastic Differential Equations (FBSDEs). The forward path-dependent equation is driven by a Lévy process, while the backward one presents a path-dependent behaviour with dependence on small delay 
$\delta$.

We establish a non-linear Feynman–Kac representation formula associating the solution of latter FBSDE system to the one of a path dependent nonlinear Kolmogorov equation with delay and jumps. In particular, we prove that the stochastic system allows to uniquely construct a solution to the parabolic Partial Differential Equation(PDE), in the spirit of the Pardoux-Peng approach, see \cite{PardouxPeng1, PardouxPeng2}, i.e.:
\begin{small}
\begin{equation}
\label{Kolmogorov}
\begin{cases}
- \partial_t u \big( t , \phi \big) - \mathcal{L} u \big(t, \phi \big) - f \big( t, \phi, u (t, \phi) , \partial_x u (t, \phi) \sigma (t, \phi),(  u ( \cdot , \phi ))_t , \mathcal{J}  u (t, \phi) \big) = 0 \, , \smallskip \\
u (T, \phi) = h (\phi) \, , \hspace{1.2cm} (t, \phi) \in [0,T] \times \Lambda \, , 
\end{cases}
\end{equation}
\end{small}
being $T< \infty$ a fixed time horizon and $\Lambda$ being $\mathcal{D} ([0,T]; \mathbb{R}^d)$, i.e. the space of càdlàg $\mathbb{R}^d$-valued functions defined on the interval $[0,T]$. The second order differential operator $\mathcal{L}$, associated to the diffusion, is defined by
\begin{equation}
    \label{operatorL}
\mathcal{L} u \big(t, \phi \big) := \dfrac{1}{2} \textsl{Tr } [\sigma (t, \phi) \sigma^{*} (t, \phi) \partial_{xx}^2  u (t, \phi) ] + \langle b (t, \phi) , \partial_x u (t, \phi) \rangle \, , 
\end{equation}
with $ b: [0,T] \times \Lambda  \rightarrow \mathbb{R}^d$ and $\sigma:  [0,T] \times \Lambda  \rightarrow \mathbb{R}^{d \times l}$ being two non-anticipative functionals while $\mathcal{J}$ defines the integro-differential operator associated to the jump behaviour
\begin{equation}
\label{jump}
\mathcal{J} u (t, \phi) := \int_{\mathbb{R} \setminus \{0\}} \big[ u \big (t, \phi  + \gamma (t, \phi, z) \big) - u (t,\phi)\big] \lambda(z) \nu(dz) \,  , 
\end{equation}
where $\gamma :  [0,T] \times \Lambda
\times \left\{ \mathbb{R} \setminus \{0\} \right\} \rightarrow \mathbb{R}^d $ is a continuous, non anticipative functional modelling the jump component while  $\lambda$ models the intensity of the jumps and $\nu$ represents the associated Lévy measure.

Also, for a fixed delay $\delta >0$ we set 
\begin{equation}
    \label{delayed}
 (  u ( \cdot , \phi ))_t := (  u ( t + \theta , \phi ))_{\theta \in [-\delta,0]} \, , 
 \end{equation}

We emphasize that the study of a path-dependent Kolmogorov equation whose generator $f$ depends on both a delayed term $(  u ( \cdot , \phi ))_t$ and on a jump operator $\mathcal{J} u (t, \phi) $,  represents the main novelty we provide in this paper.

Under appropriate assumptions on the coefficients, the deterministic non-anticipative functional $ u : [0,T] \times \Lambda \rightarrow \mathbb{R}$ given by the representation formula
\begin{equation}
\label{feynman}
 u (t, \phi) := Y^{t, \phi} (t) \, , 
 \end{equation}
is a mild solution of the Kolmorogov Equation \eqref{Kolmogorov}.

Concerning the stochastic process $Y^{t, \phi} (t)$ in Eq. \eqref{feynman}, provided conditions in Assumptions \ref{ass1} - \ref{ass2}, are fulfilled, we know that the quadruple 
$$\big(X^{t, \phi},Y^{t, \phi},Z^{t, \phi},U^{t, \phi} \big)_{s \in [t, T]} \, , $$
is the unique solution of the system of FBSDEs on $[t,T]$ given by

\begin{equation}
\label{fbsde}
\begin{cases}
 \displaystyle X^{t, \phi} (s) = \phi (t)  +  \int_t^s b (r, X^{t,\phi} ) dr + \int_t^s \sigma (r, X^{t, \phi} ) d W (r) \smallskip \\
 \displaystyle \qquad \quad
 + \int_t^s  \int_{\mathbb{R} \setminus \{ 0 \}} \gamma (r, X^{t,\phi},z) \tilde{N} (dr, dz) \smallskip \\
 \displaystyle Y^{t, \phi} (s) = h ( X^{T, \phi} ) +  \int_s^T f \big( r, X^{t, \phi}, Y^{t, \phi} (r), Z^{t, \phi} (r), \tilde{U}^{t, \phi} (r), Y_r^{t,\phi} \big) dr \smallskip \\ 
 \displaystyle \qquad \quad -  \int_s^T  Z^{t, \phi} (r) dW(r) -  \int_s^T  \int_{\mathbb{R} \setminus \{ 0 \}} U^{t,\phi} (r,  z) \tilde{N} (dr, dz) \smallskip \, , 
\end{cases}
\end{equation}

where $W$ stands for a $l$-dimensional standard Brownian motion.
We assume a delay $\delta \in \mathbb{R}^+$. Consequently, the notation $Y_r^{t, \phi}$, appearing in the generator $f$ of the backward dynamic in the system \eqref{fbsde}, stands for the delayed path of the process $Y^{t, \phi}$ restricted to $[r -\delta,r]$, namely
\begin{equation}
\label{delayed_term}
Y_r^{t, \phi} := \big(Y^{t, \phi} (r + \theta)\big)_{\theta \in [-\delta, 0]} \, .
\end{equation}



In Eq. \eqref{fbsde}, $\tilde{N}$ models a compensated Poisson random measure, independent from $W$, with associated Lévy measure $\nu$. This stochastic term appears also in the definition of the integral term $\tilde{U} : [0,T] \rightarrow \mathbb{R}$ related to the jump process by
\begin{equation}
\label{Utilde}
\tilde{U}^{t, \phi} (r) = \mathlarger{\int}_{\mathbb{R} \setminus \{ 0 \}}  U^{t, \phi} (r,z) \lambda(z) \nu (dz),
\end{equation}
we need to introduce to express the solution of \eqref{Kolmogorov} via the FBSDE system \eqref{fbsde}.

The connection between probability theory and PDEs is a widely analysed subject, the well-known Feynman-Kac formula (\cite{Kac}) being one of its main turning points stating  that solutions for a large class of second order PDEs of both elliptic and parabolic type, can be expressed in terms of expectation of a diffusion process. 
Latter result has been then generalised by Pardoux and Peng in \cite{PardouxPeng1, PardouxPeng2} to show the connection between Backward Stochastic Differential Equations (BSDEs) and a system of semi-linear PDEs, then proving the non linear Feynman-Kac formula within the Markovian setting.
Concerning the non-Markovian scenario, 
we know from \cite{peng1, PengWang} that 
a non linear Feynman-Kac formula can be still established, associating a path-dependent PDE to a non-Markovian BSDE.
More recently, the introduction of horizontal and vertical derivatives of non-anticipative functionals on path
spaces by Dupire \cite{dupire}, Cont \cite{Cont} and Fournié \cite{fournie}, facilitated the formulation of a new class of path dependent PDEs and the introduction of the so-called {\it viscosity solution} concept, see \cite{touzi1}, \cite{touzi2}, \cite{PengWang}, for more details. For a complete overview about stochastic calculus with delay we refer to \cite{sea1}, \cite{sea2} and \cite{sea3}.

A connection between time-delayed BSDEs and path-dependent PDEs has been proved by an {\it infinite dimensional lifting} approach in  \cite{FlandoliZanco} and \cite{MOTZ}.  In \cite{buck}, the authors considers a BSDE driven by a Brownian motion and a Poisson random measure that provides a viscosity solution of a system of parabolic integral-partial differential equations. In \cite{CDMZ}, the existence of a viscosity solution to a path dependent nonlinear Kolmogorov
equation (without jumps) and the corresponding nonlinear
Feynman–Kac representation has been proved.

 In this paper we deal with the notion of {\it mild solution} which can be seen as intermediate notion for solution of a PDE lying in between the notions of classical and viscosity solutions. In \cite{nonlinear}, the authors provide the definition of mild solution for non linear Kolmogorov equations along with its link with a specific stochastic process. The latter has been also proved for Semilinear Parabolic Equations in \cite{generalized}, where the definition of the {\it generalized directional gradient} is firstly introduced. The concept of  mild solution together with the generalized directional gradient to handle path-dependent Kolmogorov equation with jumps and delay has been widely analyzed in the functional formulation, see,  e.g., \cite{Oliva}. Moreover, a discrete-time approximation for  solutions of a system of decoupled FBSDEs with jumps have been proved in \cite{be} by means of  Malliavin calculus tools.

Concerning the theory of BSDE with a dependence on a delay, in \cite{DelongImkeller}, the authors proved the existence of a solution for a BSDE with a time-delayed generator that depends on the past values of the solution. In particular, both existence and uniqueness are proved assuming a sufficiently small time horizon $T$ or a sufficiently small Lipschitz constant for the generator. Let us underline that the latter has an equivalent within our setting, as we state in Remark \ref{small}. Moreover, in \cite{Delong1, DelongImkeller2} the authors defined a path-dependent BSDE with time delayed generators driven by
Brownian motions and Poisson random
measures, with coefficients depending on the whole solution's path.

In \cite{Banos}, following a different approach, namely considering systems with memory and jumps, the authors provide a characterization of a strong solution for a delayed SDE with jumps,
considering both $L^p$-type space and càdlàg processes to provide a non-linear Feynman–Kac representation theorem.


\bigskip


The present paper is structured as follows: we start providing notations and problem setting in Section \ref{sec_Poisson}, according to the theoretical framework developed by \cite{Delong1} and \cite{DelongImkeller}; in Section \ref{sec3} we study the well-posedness of the path-dependent BSDE mentioned appearing in the Markovian FBSDEs stystem \eqref{fbsde} following the approach in \cite{CDMZ} by additionally considering jumps; in Section \ref{sec4} we provide a Feynman-Kac formula relating the BSDE to 
the Kolmogorov Equation defined in \eqref{Kolmogorov} to then generalise results in \cite{Oliva} by considering a dependence in the generator $f$ of the backward dynamic on a delayed $L^2$ term, namely $Y_r^{t,\phi}$, for a small delay $\delta$; 
in Section \ref{sec5} we derive the existence of a mild solution for the Kolmogorov Equation within the setting developed in \cite{generalized}; finally, in Section \ref{SEC:FA}, we provide an application based on the analyzed theoretical setting, i.e. a version of the {\it Large Investor Problem} characterised by a jump-diffusion dynamic.

\section{Notation and Problem formulation}
\label{sec_Poisson}
On a probability space $\big( \Omega, \mathcal{F}, \mathbb{P} \big)$, we consider a standard $l$-dimensional Brownian motion $W$ and a homogeneous Poisson random measure $N$ on $\mathbb{R}^{+}\times(\mathbb{R}\setminus \{0\})$, independent from $W$, with intensity $\nu$. 
With the notation $\mathbb{R}_0 := \mathbb{R}\setminus \{0\}$, we also define the compensated Poisson random measure $\tilde{N}$ defined on $\mathbb{R}^+ \times\mathbb{R}_0$ by
\begin{equation}
\label{poisson}
\tilde{N} (dt, dz) := N(dt, dz) - \nu (dz)dt \, .
\end{equation}
For the sake of completeness, we recall that
the term $ \nu (dz)dt$  is known as the {\it compensator} of the random measure $N$.
Moreover, we  assume that the Lèvy measure $\nu$ satisfies
\begin{equation}
    \label{levy}
 \int_{\mathbb{R}_0} |z|^2\nu (dz) < \infty\,,
 \end{equation}
 being a standard assumption when dealing  with financial applications. The reader could see, e.g., in \cite{Bass} further details about the stochastic integration with the presence of jumps.

\subsection{The forward-backward delayed system}

\label{fbsystem}

In this section we introduce the delayed forward-backward system, assuming path dependent coefficients for the forward and the backward components and  a dependence on a small delay into the generator $f$ and
the  presence of jumps modeled via a compound Poisson measure. Furthermore, the equation is formulated on a general initial time $t$ and initial values. Thus, we need to equip the backward equation with a suitable condition in $ [0,t)$, as we introduced in Equation \eqref{supplementary}.

On previously defined probability space, we consider a filtration $ \mathbb{F}^t = \{ \mathcal{F}_s^t \}_{s \in [0,T]}$, which is nothing but the one jointly generated by $W (s \wedge T) - W (t)$ and $N(s \wedge T,\cdot) - N(t,\cdot)$, augmented by all $\mathbb{P}$-null sets. We emphasize that $ \mathbb{F}^t $ depends explicitly on $t$, namely the arbitrary initial time in $[0,T]$ for the dynamic in Eq. \eqref{fbsde}.

Furthermore,the components of the solution of the backward dynamic are defined in the following Banach spaces: 
 
\begin{itemize}

\item $\mathbb{S}_t^2 (\mathbb{R}) $ denotes the space of (equivalence class of) $\mathbb{F}^t$-adapted, product measurable càdlàg processes $Y: \Omega
\times [0,T] \rightarrow \mathbb{R}$ satisfying 
$$\mathbb{E} \Bigg[ \sup_{t \in [0,T]}|Y(t)|^2  \Bigg] < \infty \, ; $$

 \item  $\mathbb{H}^2_t (\mathbb{R}^{l}) $ denotes the space of (equivalence class of) $\mathbb{F}^t$-predictable processes $Z: \Omega
\times [0,T] \rightarrow \mathbb{R}^{ l}$ satisfying $$ \mathbb{E} \Bigg[ \int_0^T |Z(t)|^2 dt \Bigg] <\infty \, ; $$

\item $\mathbb{H}_{t,\nu}^2 (\mathbb{R}) $ denote the space of (equivalence class of) $\mathbb{F}^t$-predictable processes $U :\Omega \times [0,T] \times \mathbb{R}_0  \rightarrow \mathbb{R}$ satisfying
 $$ \mathbb{E} \Bigg[ \int_0^T \int_{\mathbb{R}_0} | U(t, z)|^2  \nu(dz) dt \Bigg]   < \infty \, .$$


\end{itemize}

The spaces $\mathbb{S}_t^2 (\mathbb{R}) $, $\mathbb{H}^2_t (\mathbb{R}^{l})$ and $\mathbb{H}_{t,\nu}^2 (\mathbb{R}) $ are endowed with the following norms:
$$|| Y ||^2_{\mathbb{S}_t^2 (\mathbb{R}) } = \mathbb{E} \Bigg[ \sup_{t \in [0,T]}  e^{\beta t}|Y(t)|^2  \Bigg] \, , $$
$$ || Z ||^2_{\mathbb{H}^2_t (\mathbb{R}^{ l})} = \mathbb{E} \Bigg[ \int_0^T e^{\beta t} |Z(t)|^2 dt \Bigg] \, , $$

and
$$ || U ||_{ \mathbb{H}_{t,\nu}^2 (\mathbb{R}) }^2 = \mathbb{E} \Bigg[ \int_0^T \int_{\mathbb{R}_0} e^{\beta t} | U(t, z)|^2  \nu(dz) dt  \Bigg] \, , $$

for some $\beta >0$, to be precised later.

The main goal is to find a family of stochastic processes $\Big( X^{t, \phi}, Y^{t, \phi} , Z^{t, \phi},  U^{t, \phi} \Big)_{t, \phi \in [0,T] \times \Lambda}$ adapted to $\mathbb{F}^t$ such that the following decoupled forward-backward system holds a.s.

\begin{small}
\begin{equation}
\label{fbsde1}
\begin{cases}

\displaystyle  X^{t, \phi} (s) = \phi (t)  +  \int_t^s b (r, X^{t,\phi} ) dr +  \int_t^s \sigma (r, X^{t, \phi} ) d W (r) + \smallskip \\
\qquad \qquad
\displaystyle
+  \int_t^s  \int_{\mathbb{R}_0} \gamma (r, X^{t,\phi},z) \tilde{N} (dr, dz) \, , \hspace{1.5cm}  s \in [t,T] \smallskip \\

X^{t,\phi} (s) = \phi (s) \, ,  \hspace{5cm}   \qquad s \ \in [0,t] \smallskip\\

\displaystyle
Y^{t, \phi} (s) = h (X^{t, \phi}) +  \int_s^T f \left( r, X^{t, \phi}, Y^{t, \phi} (r), Z^{t, \phi} (r),  \int_{\mathbb{R}_0} U^{t, \phi} (r,z) \lambda(z) \nu(dz), Y_r^{t,\phi} \right) dr  \smallskip \\ 
\qquad \qquad
\displaystyle
 -  \int_s^T  Z^{t, \phi} (r) dW(r) -  \int_s^T  \int_{\mathbb{R}_0} U^{t, \phi} (r, z) \tilde{N} (dr, dz) \,  , \hspace{.3cm} \qquad  s \in [t,T] \smallskip \\ 
 
Y^{t,\phi} (s) = Y^{s,\phi} (s) \, , \hspace{.3cm} Z^{t,\phi} (s) = U^{t,\phi} (s,z) = 0 \, , \hspace{1.2cm}  s \in [0,t] \, . \smallskip
\end{cases}
\end{equation}
\end{small}


It is worth to mention  that, differently from \cite{Delong1}, we work in a Markovian setting,  enforcing an initial condition over all the interval $[0,t]$. More precisely, for both forward and backward equations, the values of the solution $X^{t, \phi}$ need to be known in the time interval $[0,t]$.
Analogously, regarding the backward component, the values of $Y^{t, \phi}$, $Z^{t, \phi}$ and  $U^{t, \phi}$ need also to be prescribed for $s \in [0,t]$.

\begin{remark}
\label{delta}
The $\delta$-delayed feature concerns only $Y$, but we emphasize that it is possible to generalize this result to treat the case where both $Z$ and $U$ depend on their past values for a fixed delay $\delta$.

For the sake of simplicity, we consider the case with $Y_r$, hence limiting ourselves to just one, the process $Y$, $L^2$ delayed term. As a consequence, the latter implies that we will have to 
consider a larger functional space to properly define the contraction that is an essential step to prove the fix point argument in Th. \ref{theorem1}.

\end{remark}

\subsubsection{The forward path-dependent SDE with jumps}

We first study the forward component of $X$ appearing in the system \eqref{fbsde1}. It is defined according to the following equation:
\begin{equation}
\begin{cases}
\label{forward}
 \displaystyle X^{t, \phi} (s) = \phi (t)  +  \int_t^s b (r, X^{t,\phi} ) dr +  \int_t^s \sigma (r, X^{t, \phi} ) d W (r) + \smallskip \\
\qquad \displaystyle + \int_t^s  \int_{\mathbb{R}_0} \gamma (r, X^{t,\phi},z) \tilde{N} (dr, dz) \, , \hspace{1cm}  s \in [t,T] \smallskip  \\
X^{t,\phi} (s) = \phi (s) \, ,  \hspace{4.7cm}   s \ \in [0,t]  \, . \smallskip
\end{cases}
\end{equation}

 The existence and the pathwise uniqueness for a solution of \eqref{forward}  is a classical result, proved, for instance, in \cite{Sk}, exploiting a
 Picard iteration approach.
 We assume the following assumptions to hold:

\begin{ass}
\label{ass1}
Let us consider three non anticipative functions $b: [0,T] \times \Lambda \rightarrow \mathbb{R}^d$, $\sigma: [0,T] \times \Lambda \rightarrow \mathbb{R}^{d \times l}$ and $\gamma: [0,T] \times \Lambda \times \mathbb{R} \rightarrow \mathbb{R}^{d}$ such that

\begin{itemize}
\item[($A_1$)] $b$, $\sigma$ and $\gamma$ are continuous;
\item[($A_2$)] there exists $\ell > 0$ such that 
$$  | b(t, \phi) - b (t, \phi') | +  | \sigma(t, \phi) - \sigma (t, \phi') | +  ||\gamma (t, \phi, \cdot)- \gamma(t, \phi', \cdot)||_{L^2(\mathbb{R}_0,\nu)}  \,  \leq  \, \ell || \phi  - \phi'||_{L^\infty[0,T]} $$
for any $t \in [0,T]$, $\phi, \phi' \in \Lambda$;

\item[($A_3$)] the following bound $$\int_{\mathbb{R}_0} \sup_\phi |\gamma (r, \phi, z)|^2 \nu(dz)  < \infty \, ,$$ holds.

\end{itemize}
\end{ass}

If ($A_1$), ($A_2$), ($A_3$) hold, then there exists a solution to \eqref{forward} and this solution is pathwise unique, see, e.g., \cite{buck} for a detailed proof.


%

\subsubsection{The backward delayed path-dependent SDE with jumps}
\label{statespace}

We now focus on the BSDE appearing in the  system \eqref{fbsde1}, namely

\begin{small}
\begin{equation}
\label{backward}
\begin{cases}
\displaystyle Y^{t, \phi} (s) = h   (X^{t, \phi}) + \int_s^T f \left( r,  X^{t, \phi}, Y^{t, \phi} (r), Z^{t, \phi} (r), \int_{\mathbb{R}_0} U^{t, \phi} (r,z) \lambda(z) \nu(dz), Y_r^{t,\phi} \right) dr \medskip  \\ 
\displaystyle  \qquad  -  \int_s^T  Z^{t, \phi} (r) dW(r) - \int_s^T  \int_{\mathbb{R}_0} U^{t, \phi} (r, z) \tilde{N} (dr, dz) \,  , \hspace{.8cm}  s \in [t,T] \medskip \\ 
\displaystyle  Y^{t,\phi} (s) = Y^{s,\phi} (s) \, , \hspace{.8cm} Z^{t,\phi} (s) = U^{t,\phi} (s,z) = 0 \, , \hspace{1.9cm}  s \in [0,t] \, . \smallskip
\end{cases}
\end{equation}
\end{small}
for a finite time horizon $T < \infty$ and $\phi \in \Lambda := D \big( [0,T]; \mathbb{R}^d \big)$. The path-dependent process $X^{t, \phi}$ represents the solution of the forward SDE with jumps  of Eq. \eqref{forward}, while $\tilde{N}$ models the compensated Poisson random measure described in Eq. \eqref{poisson} and $W$ is a  $l$-dimensional Brownian motion.

We recall that, when we fix the delay term $\delta$, the notation $Y_r^{t,\phi}$ stands for the path of the process restricted to $[r- \delta, r]$, according to Eq.\eqref{delayed_term}. Notice that the terminal condition enforced by $h$ depends on the solution of the forward SDE \eqref{forward} as well as the solution $(Y, Z, U)$ of the backward component considered in the  time interval $[t,T]$.

Differently from the framework studied by Delong in \cite{Delong1}, we consider a general initial time $s \in [0,t)$. As highlighted in \cite{CDMZ}, the Feynman Kac formula would fail with standard prolongation. 

Thus, an additional initial condition has to be satisfied over the interval $[0,t]$, 
given by
\begin{equation}
\label{supplementary}
Y^{t, \phi} (s) = Y^{s, \phi} (s) \, , \hspace{.6cm} s \in [0,t) \, .
\end{equation}

We remark that the supplementary initial condition stated in Eq. \eqref{supplementary} represents one of the main difference between Theorem \ref{theorem1} and Theorem 14.1.1 in \cite{Delong1}.

\section{The Well Posedness of the BSDE}
\label{sec3}

Concerning the delayed backward SDE \eqref{backward}, we will assume the
following to hold.
\begin{ass}
\label{ass2}

Let $f:[0,T]\times{\Lambda}\times\mathbb{R} \times \mathbb{R}^{l} \times \mathbb{R} \times L^{2}\left( [-\delta,0];\mathbb{R} \right)  
\rightarrow \mathbb{%
R}$ and $h: [0,T] \times \Lambda  \rightarrow\mathbb{R}$ such that the
following holds:

\begin{itemize}

\item[($A_4$)]  There exist $L,K,M>0$, $p\geq 1$ and a
probability measure $\alpha $ on $\left( [-\delta ,0],\mathcal{B}\left( %
\left[ -\delta ,0\right] \right) \right) $ such that, for any $t\in \lbrack
0,T]$, $\phi \in {\Lambda }$, $\left( y,z, u\right) ,(y^{\prime
},z^{\prime }, u^{\prime})\in \mathbb{R} \times \mathbb{R}^{l} \times \mathbb{R}$ and $%
\hat{y},\hat{y}^{\prime }\in L^{2}\left( [-\delta ,0];\mathbb{R} \right) $, we have%
\begin{equation*}
\begin{array}{rl}
\left( i\right) & \displaystyle\phi \mapsto f\left(t,\phi ,y,z, u, \hat{y}\right)%
\text{ is continuous,}\smallskip \\ 
\left( ii\right) & \displaystyle|f(t,\phi ,y,z,u,\hat{y})-f(t,\phi
,y^{\prime },z^{\prime }, u^{\prime },\hat{y})|\leq L \left( |y-y^{\prime
}|+|z-z^{\prime }|+|u-u^{\prime }| \right),\smallskip \\ 
\multicolumn{1}{l}{\left( iii\right)} & \displaystyle|f(t,\phi ,y,z,u,\hat{y})-f(t,\phi ,y,z,u,\hat{y}^{\prime })|^{2} \leq K\int_{-\delta }^{0}\left(
\left\vert \hat{y}(\theta )-\hat{y}^{\prime }(\theta )\right\vert
^{2} \right) \alpha (d\theta ),\smallskip \\ 
\left( iv\right) & \displaystyle\left\vert f\left( t,\phi ,0,0,0,0\right)
\right\vert <M(1+\left\Vert \phi \right\Vert _{T}^{p}).%
\end{array}%
\end{equation*}

\item[($A_5$)] The function $f\left( \cdot ,\cdot ,y,z,u,
\hat{y} \right) $ is $\mathbb{F}$--progressively measurable, for any $%
\left( y,z,u,\hat{y}\right) \in \mathbb{R} \times \mathbb{R}%
^{d^{\prime }} \times \mathbb{R} \times L^{2}\left( [-\delta ,0];\mathbb{R} \right)$.

\item[($A_6$)] The function $\phi \mapsto h(t,\phi)$  is continuous and $%
|h(\phi )|\leq M(1+\left\Vert \phi \right\Vert _{T}^{p}),$ for all $\phi \in 
{\Lambda }.$
\end{itemize}
\end{ass}

The following remark generalizes a classical result, see, e.g., Theorem 14.1.1 in \cite{Delong1}, Theorem 2.1 in   \cite{DelongImkeller} or Theorem 2.1 in \cite{DelongImkeller2}.

\begin{remark}
\label{small}
In order to show both existence and uniqueness of a
solution to the backward part of the system \eqref{fbsde} and to obtain the
continuity of $Y^{t,\phi }$ with respect to $\phi $, 
we need  to impose
$K$ or $\delta$ to be small enough.
More precisely, we will assume that there exists a constant $\chi \in
(0,1) $, such that:%
\begin{equation}
K\,\frac{\chi e^{\big(\chi +\frac{6L^{2}}{\chi }\big)\delta }}{%
(1-\chi )L^{2}}\,\max \left\{ 1,T\right\} <\frac{1}{578}\,.
\label{condition_KT}
\end{equation}
\end{remark}

The main difference between our result and Theorem 3.4 in \cite{CDMZ}
relies in the presence of a jump component in the dynamics of the unknown process $Y^{t,\phi}$: this further term implies a stronger bound in the condition enforced in Eq. \eqref{condition_KT}.

Hence, if $K$ or $\delta $ are be small enough to satisfy the condition stated in Eq. \eqref{condition_KT}, then there exists a unique solution of \eqref{backward} and the following theorem holds

%

\begin{theorem}
\label{theorem1}
Let assumptions $A_4, A_5, A_6$ hold. If condition \eqref{condition_KT} is satisfied, then there exists a unique solution $(Y^{t,\phi}, Z^{t,\phi}, U^{t, \phi})$ of the BSDE  described in \eqref{backward} such that $(Y^{t,\phi}, Z^{t,\phi}, U^{t, \phi}) \in \mathbb{S}_t^2 (\mathbb{R}) \times  \mathbb{H}^2_t (\mathbb{R}^{ l}) \times  \mathbb{H}_{t,N}^2 (\mathbb{R}) $ for all $t \in [0,T]$ and the application $t \rightarrow (Y^{t,\phi}, Z^{t,\phi}, U^{t, \phi})$ is continuous from $[0,T]$ into $ \mathbb{S}_0^2 (\mathbb{R}) \times  \mathbb{H}^2_0 (\mathbb{R}^{l})  \times \mathbb{H}_{0,N}^2 (\mathbb{R})$.
\end{theorem}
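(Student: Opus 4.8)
The plan is to prove Theorem~\ref{theorem1} by a fixed-point argument in a suitably weighted Banach space, following the approach of Theorem~3.4 in~\cite{CDMZ} but accounting for the jump component $U^{t,\phi}$ and the associated integral term $\tilde U^{t,\phi}$. First I would fix $t\in[0,T]$ and work on the space $\mathcal{B}_t := \mathbb{S}_t^2(\mathbb{R}) \times \mathbb{H}^2_t(\mathbb{R}^l) \times \mathbb{H}^2_{t,\nu}(\mathbb{R})$ equipped with the $\beta$-weighted norms introduced in the excerpt, with $\beta>0$ to be chosen. The delay term $Y_r^{t,\phi}$ forces us to enlarge the solution space to include the prescribed past on $[0,t]$ via the supplementary condition~\eqref{supplementary}, $Y^{t,\phi}(s) = Y^{s,\phi}(s)$ for $s\in[0,t)$, so the contraction must be set up jointly over the family of initial times — this is precisely the point flagged in Remark~\ref{delta}. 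I would define a map $\Gamma$ that, given a candidate $(y,z,u)$, first solves the forward SDE~\eqref{forward} (whose well-posedness is granted by Assumption~\ref{ass1}), then solves the \emph{decoupled, non-delayed} BSDE
\begin{equation*}
\tilde Y(s) = h(X^{t,\phi}) + \int_s^T f\bigl(r, X^{t,\phi}, y(r), z(r), \textstyle\int_{\mathbb{R}_0} u(r,z)\lambda(z)\nu(dz), y_r\bigr)\,dr - \int_s^T \tilde Z(r)\,dW(r) - \int_s^T\!\!\int_{\mathbb{R}_0} \tilde U(r,z)\tilde N(dr,dz),
\end{equation*}
whose unique solution $(\tilde Y,\tilde Z,\tilde U)$ exists by the classical theory of Lipschitz BSDEs with jumps (e.g.~\cite{buck}), since with $(y,z,u,y_r)$ frozen the driver is merely a measurable process; here one uses $(A_4)(iv)$ and $(A_6)$ together with the moment estimates on $X^{t,\phi}$ to check the required $L^2$-integrability. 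Setting $\Gamma(y,z,u) := (\tilde Y,\tilde Z,\tilde U)$, a fixed point of $\Gamma$ is exactly a solution of~\eqref{backward}.

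The core estimate is to show $\Gamma$ is a contraction for suitable $\beta$ and under~\eqref{condition_KT}. Taking two inputs $(y^i,z^i,u^i)$, $i=1,2$, writing $\delta y = y^1-y^2$ etc.\ and $\delta\tilde Y = \tilde Y^1 - \tilde Y^2$ etc., I would apply the It\^o formula to $e^{\beta s}|\delta\tilde Y(s)|^2$ and take expectations, producing on the left-hand side the full norm $\|\delta\tilde Y\|^2_{\mathbb{S}_t^2} + \|\delta\tilde Z\|^2_{\mathbb{H}^2_t} + \|\delta\tilde U\|^2_{\mathbb{H}^2_{t,\nu}}$ (after a Burkholder--Davis--Gundy step for the supremum norm on $\tilde Y$), and on the right-hand side a term $\mathbb{E}\int_t^T e^{\beta r}|\delta\tilde Y(r)|\,\bigl|f(\cdots,y^1,z^1,u^1,y^1_r) - f(\cdots,y^2,z^2,u^2,y^2_r)\bigr|\,dr$. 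Splitting the driver difference by $(A_4)(ii)$ into the $|{\delta y}| + |{\delta z}| + |{\delta u}|$ part — where $|\delta u|$ denotes the difference of the $\int_{\mathbb{R}_0} u(r,z)\lambda(z)\nu(dz)$ terms, controlled by Cauchy--Schwarz in $z$ using~\eqref{levy} and integrability of $\lambda$ — and by $(A_4)(iii)$ into the delay part $\bigl(K\int_{-\delta}^0 |\delta y(r+\theta)|^2\,\alpha(d\theta)\bigr)^{1/2}$, then using Young's inequality and Fubini on the $\alpha$-integral (the change of variables $r\mapsto r+\theta$ is where the exponential weight $e^{(\chi + 6L^2/\chi)\delta}$ and the factor $\max\{1,T\}$ enter), one arrives at an estimate of the form
\begin{equation*}
\|\Gamma(y^1,z^1,u^1) - \Gamma(y^2,z^2,u^2)\|^2_{\mathcal{B}_t} \le C(\beta)\,\|(y^1,z^1,u^1) - (y^2,z^2,u^2)\|^2_{\mathcal{B}_t},
\end{equation*}
where $C(\beta) \to$ (a quantity controlled by the left side of~\eqref{condition_KT}) as $\beta\to\infty$ for the non-delay contributions, while the delay contribution is genuinely bounded by $K$ times the $\max\{1,T\}$-weighted exponential factor; choosing $\beta$ large and invoking~\eqref{condition_KT} with the explicit constant $1/578$ makes $C(\beta)<1$. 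The constant $578$ is the bookkeeping output of tracking all the $2$'s and $3$'s from BDG, the three Lipschitz summands, and the two stochastic integrals — the jump integral is what bumps it up relative to~\cite{CDMZ}, as the authors note.

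Once $\Gamma$ is a contraction on $\mathcal{B}_t$, the Banach fixed-point theorem yields existence and uniqueness of $(Y^{t,\phi},Z^{t,\phi},U^{t,\phi})$ for each fixed $t$, with the prescribed past handled by the supplementary condition~\eqref{supplementary}. The last claim — continuity of $t\mapsto(Y^{t,\phi},Z^{t,\phi},U^{t,\phi})$ from $[0,T]$ into $\mathbb{S}_0^2 \times \mathbb{H}^2_0 \times \mathbb{H}^2_{0,\nu}$ — I would obtain by a stability estimate: for $t,t'\in[0,T]$, first establish continuity of $t\mapsto X^{t,\phi}$ in the appropriate $L^2$-norm using the forward SDE and the Lipschitz/continuity hypotheses $(A_1)$–$(A_3)$ together with continuity of $\phi$ under the Skorokhod-type sup-norm, then feed this into the same It\^o-plus-BDG computation applied to the difference $Y^{t,\phi} - Y^{t',\phi}$, using $(A_4)(i)$ and $(A_6)$ for continuity of $f$ and $h$ in $\phi$ and dominated convergence to kill the driver-difference term; uniformity in $\beta$ of the earlier contraction estimate lets one absorb the solution-difference terms on the left. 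The main obstacle, as anticipated, is the delay term: it does not contract on its own for arbitrary $\delta$, so the whole scheme only closes under the smallness condition~\eqref{condition_KT}, and care is needed to set up the weighted norm and the $r\mapsto r+\theta$ shift so that the delayed $L^2$-term is controlled by the \emph{current} norm of $\delta y$ with the correct exponential constant — getting that constant sharp enough to land below $1/578$ after including the jump terms is the delicate part of the bookkeeping.
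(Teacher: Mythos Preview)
Your proposal has the right ingredients (weighted It\^o estimate, BDG, Young's inequality, the $r\mapsto r+\theta$ shift in the delay integral) but the organization of the fixed-point argument has a genuine gap. You say you will fix $t$ and run a contraction on $\mathcal{B}_t$, yet you also correctly observe that the supplementary condition $Y^{t,\phi}(s)=Y^{s,\phi}(s)$ for $s\in[0,t)$ couples all initial times; these two statements are incompatible, and you never resolve the tension. Concretely, for $r\in[t,t+\delta)$ the delayed path $Y_r^{t,\phi}$ reaches back to $Y^{t,\phi}(r+\theta)$ with $r+\theta<t$, which by~\eqref{supplementary} equals $Y^{r+\theta,\phi}(r+\theta)$ --- a quantity that is not determined by any single-$t$ problem. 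So a fixed point on $\mathcal{B}_t$ for one $t$ at a time cannot close, and the phrase ``with the prescribed past handled by the supplementary condition'' is circular.

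The paper's proof fixes this by changing both the domain and the action of $\Gamma$. The contraction is set up on $\mathcal{A}:=\mathcal{C}\bigl([0,T];\mathbb{S}_0^2(\mathbb{R})\bigr)$, i.e.\ on \emph{families} $R=\{R^t\}_{t\in[0,T]}$, with the norm $\|Y\|_{\mathcal{A}}^2=\sup_{t\in[0,T]}\mathbb{E}\sup_{r\in[0,T]}e^{\beta r}|Y^t(r)|^2$. Moreover $\Gamma$ freezes only the delay argument: for each $t$ one solves the genuine (non-delayed) Lipschitz BSDE with jumps with driver $F(r,X^{t,\phi},Y^t(r),Z^t(r),\tilde U^t(r),R_r^t)$, and sets $\Gamma(R):=Y$. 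Two consequences of this setup are worth noting. First, continuity of $t\mapsto Y^t$ is not an afterthought but is exactly Step~I of the proof --- it is what shows $\Gamma$ maps $\mathcal{A}$ into $\mathcal{A}$; your plan to prove continuity separately after obtaining a per-$t$ fixed point would not feed back into the construction. Second, because only $R$ is frozen, the contraction estimate in Step~II compares two families $R^1,R^2$ and must control the resulting $\Delta Y^t,\Delta Z^t,\Delta U^t$ through the intermediate inequality~\eqref{technical ineq 3}; the term $\mathbb{E}\sup_{s\in[0,t]}e^{\beta s}|\Delta Y^t(s)|^2$ is bounded via the deterministic identity $\Delta Y^t(s)=\Delta Y^s(s)$ for $s\le t$ (see~\eqref{technical ineq 6}), which is where the family-indexed norm is essential and which has no analogue in a fixed-$t$ scheme. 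Your estimate for the delay shift and the origin of the constants (the $6L^2/a$, the $144$ from BDG, the combination producing $578$) match the paper, but the functional-analytic scaffolding around them needs to be the $\mathcal{A}$-space one.
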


The proof of Th. \ref{theorem1} is provided in Appendix \ref{AppA} and it is mainly based on the Banach fixed point theorem.


We emphasize that similar results hold also for multi-valued processes, namely $Y: \Omega \times [0,T] \rightarrow \mathbb{R}^m$, $Z: \Omega \times [0,T] \rightarrow \mathbb{R}^{m \times l}$ and $U: \Omega \times [0,T] \times (\mathbb{R}^n \setminus \{0\}) \rightarrow \mathbb{R}^m$.
Further difficulties may arise, due to presence of correlation between the different components of $Y \in \mathbb{S}_t^2 (\mathbb{R}^m)$ or the necessity of introducing the {\it $n$-fold iterated stochastic integral}, see \cite{Bell}, \cite{be} or \cite{Oliva} (Sec. 2.1),  for further details.


%
%
%
%
%
%
%

\section{The Feynman-Kac formula}
\label{sec4}

In what follows we prove that the solution of of Eq. \eqref{fbsde}, namely the path-dependent forward-backward system with delayed generator $f$ and driven by a Lévy process,
can be connected to solution of path-dependent PIDE represented by the non-linear Kolomogorov equation \eqref{Kolmogorov}.

\subsection{The Delfour-Mitter space}

According to \cite{Banos}, we need the solution of the forward SDE \eqref{fbsde} to be a Markov process as to derive the Feynman-Kac formula.  The Markov property of the solution is fully known for the SDE without jumps, i.e. when $\gamma = 0$, see \cite{sea1} (Th. III. 1.1). Moreover, the Markov property  also holds, by enlarging the state space, for the solution  in a setting analogous to that of Eq. \eqref{forward}, see \cite{Oliva} (Prop. 2.6) where driving noises with independent increments are considered.
Since $X^{t,\phi} : [0,T] \times D ([0,T]; \mathbb{R}^d) \rightarrow \mathbb{R}^d$ in not Markovian, we enlarge the state space by considering the
process $X$ as a {\it process of the path}, by introducing a suitable Hilbert space, as described in \cite{FlandoliZanco} and \cite{MOTZ}, where they present a product-space reformulation of \eqref{forward2} splitting the present state $X (t)$ from the past trajectory $X_{[0,t)}$ by a particular choice of the state space. 

Accordingly, we enlarge the state space of our interest, starting  from paths defined on the Skorohod space $D \left( [0,T]; \mathbb{R}^d \right)$ to then consider a {\it new } functional space, the so-called {\it Delfour-Mitter space} $M^2 := L^2 ([-T,0]; \mathbb{R}^d) \times \mathbb{R}^d$, by exploiting the continuous embedding of $D\left( [0,T]; \mathbb{R}^d \right)$ into $L^2 ([-T,0]; \mathbb{R}^d)$, as in-depth analyzed in, e.g., \cite{Banos} and \cite{Oliva}. Moreover, we rewrite the second one as a function on $[-t, 0)$ via a change of variable (in time) and
then we lengthen it towards the past up to $[-T, 0)$ to consider the whole path.
The latter approach allows us to identify the càdlàg process $X^{t,\phi}$ with the enlarged process ${X}^{t, \eta, x}$ defined over $M^2$, namely
$${X}^{t, \eta, x}= \left( \begin{array}{c} X^{t, \eta, x} (s), \{X^{t, \eta, x}_{s+r}\}_{r \in [-T,0]}
\end{array} \right) \in \mathbb{R}^d \times {L^2}([-T, 0];\mathbb{R}^d) \, . $$
Here, $X(s)$ denotes the present state that is well defined for any time $s \in [0,T]$, $X_{s+r}$ denoting the {\it segment} of the path described by $X$, which takes values in $L^2([-T, 0];\mathbb{R}^d)$, during the time interval $[s - T, s]$.

It is worth mentioning that $M^2$ has a Hilbert space structure, endowed with the following scalar product
$$\langle \phi  , \psi  \rangle_{M^2} = \langle \phi, \psi \rangle_{L^2} + \phi(0) \cdot \psi(0) \, , $$
with associated norm
$$ || \phi  ||^2_{M^2} = || \phi ||_{L^2} + |\phi(0)|^2 \, ,$$
where $\cdot$ and $|\cdot|$ stand for the scalar product in $\mathbb{R}^d$, resp. for the euclidean norm in $\mathbb{R}^d$, while  $\langle \cdot, \cdot \rangle_{L^2}$, resp. $||\cdot||_{L^2}$, indicates the scalar product, resp. the norm in $L^2 := L^2 (-T, 0; \mathbb{R}^d$).

By the continuous injections $D \subset M^2$,
we can formulate the forward equation with \eqref{forward} $M^2$ coefficients
\begin{equation}
\label{forward2}
\begin{cases}
 \displaystyle X^{t, \eta, x} (s) = x +  \int_t^s \tilde{b} (r, X^{t, \eta, x}_r, X^{t, \eta, x}(r) ) dr +  \int_t^s \tilde{\sigma} (r, X^{t, \eta, x}_r, X^{t, \eta, x}(r)) d W (r) + \smallskip \\
 \displaystyle \qquad + \int_t^s  \int_{\mathbb{R}_0} \tilde{\gamma} (r, X^{t, \eta, x}_r, X^{t, \eta, x}(r),z) \tilde{N} (dr, dz) \, ,  \displaystyle \hspace{1cm}  s \in [t,T] \smallskip\\
\left(  X^{t, \eta, x}_s , X^{t, \eta, x}(s) \right)= (\eta, x) \, ,  \hspace{4.7cm}   s \ \in [0,t] \, . \smallskip
\end{cases}
\end{equation}

For $t\in\lbrack0,T]$, $\phi\in\Lambda$ and $\varphi\in L^{2}%
([-T,0];\mathbb{R}^{d})$, let us set:

\begin{itemize}
\item compatible initial conditions $ x^{t,\phi} := \phi(t)$ and $\eta^{t,\phi}\in D([-T,0];\mathbb{R}^{d})$
defined by%
\begin{equation}
\label{eta}
\eta^{t,\phi}(\theta):=\left\{
\begin{array}
[c]{ll}%
\phi(t+\theta), & \theta\in\lbrack-t,0]\text{;}\\
\phi(0), & \theta\in\lbrack-T,t)\text{;}%
\end{array}
\right.
\end{equation}

\item $\varphi^{t}\in\Lambda$ defined by%
\begin{equation}
\label{varphi}
\varphi^{t}(\theta):=\left\{
\begin{array}
[c]{ll}%
\varphi(\theta-t), & \varphi\in D([-T,0];\mathbb{R}^{d}),\ \theta\in
\lbrack0,t]\text{;}\\
\varphi(0), & \varphi\in D([-T,0];\mathbb{R}^{d}),\  \theta\in(t,T]\text{;}\\
0, & \varphi\notin D([-T,0];\mathbb{R}^{d})\text{;}%
\end{array}
\right.
\end{equation}

\item $\tilde{b}:[0,T]\times M^{2}\rightarrow\mathbb{R}^{d}$, $\tilde{\sigma
}:[0,T]\times M^{2}\rightarrow\mathbb{R}^{d\times l}$, $\tilde
{\gamma}:[0,T]\times M^{2}\times\mathbb{R}\rightarrow\mathbb{R}^{d}$ defined
by%
\begin{equation}
\label{enlarged}
\tilde{b}(t,\varphi,x):=b(t,\varphi^{t})\text{, }   \quad \tilde{\sigma}(t,\varphi
,x):=\sigma(t,\varphi^{t})\text{, } \quad \tilde{\gamma}(t,\varphi,x,z):=\gamma
(t,\varphi^{t},z);
\end{equation}

\end{itemize}


where $b:[0,T]\times\Lambda\rightarrow\mathbb{R}^{d}$,\ $\sigma:[0,T]\times
\Lambda\rightarrow\mathbb{R}^{d \times l}$, $\gamma:[0,T]\times
\Lambda\times\mathbb{R}\rightarrow\mathbb{R}^{d}$ and $f$ are the given coefficients of Eq. \eqref{forward}.

We have to additionally impose that $b$, $\sigma$, $\gamma$, $f$ and $h$ are
locally Lipschitz-continuous with respect to $\phi\in\Lambda$ in the $L^{2}$-norm: as understood in \cite{generalized}, the function $u$ in Eq. \eqref{kac}, namely the solution of the Kolmogorov Equation \eqref{Kolmogorov}, is locally Lipschitz in $x$ with polynomial growth. Thus, in order to have the same regularity for the solution of the BSDE system with forward Eq. \eqref{forward2}, we require that the coefficients $b$, $\sigma$ and $\gamma$ to be locally Lipschitz.

\begin{ass}
\label{ass6.2}

There exists $K \geq 0$ and $m \geq 0$ such that:

\begin{itemize}
\item[($A_7$)] for all $t \in [0,T]$ and for all $\phi_1, \phi_2 \in \Lambda$, 
    $$ \begin{array}{l}
\displaystyle | b (t, \phi_1) - b (t, \phi_2) |^2  + | \sigma (t, \phi_1) - \sigma (t, \phi_2) |^2  + \int_{\mathbb{R}_0}  | \gamma (t, \phi_1,z) - \gamma (t, \phi_2, z) |^2 \nu (dz)
 \smallskip \\
\displaystyle \qquad   \leq K || \phi_1 - \phi_2||^2_{L^2} (1 + | |\phi_1||^2_{L^2} + ||\phi_2||^2_{L^2} ) \, ;
 \end{array}
$$
\item[($A_8$)]  for all $t \in [0,T]$, $y \in \mathbb{R}$, $z \in\mathbb{R}^l$, $u \in \mathbb{R}$, $\hat{y} \in L^2 ([-\delta,0] ; \mathbb{R})$ and for all $\phi_1, \phi_2 \in \Lambda$ 
$$
\begin{array}{l}
  |f (t, \phi_1, y, z, u, \hat{y})   - f (t, \phi_2, y, z, u, \hat{y} )| \leq \smallskip \\
 \qquad \qquad  \qquad  K (1 + ||\phi_1||_{L^2} + ||\phi_2||_{L^2} + |y|)^m \cdot (1 + |z| + |u|) || \phi_1 - \phi_2||_{L^2} \, .
\end{array}
$$
\end{itemize}

\end{ass}

Within this setting, lifting the state space turns out to be particularly convenient to
investigate differentiability properties of the solution and to relate the solution of Eq. \eqref{forward2}  (combined with a proper backward equation) to the solutions of the non-linear Kolmogorov equation defined by Eq. \eqref{Kolmogorov}, defined on $[0,T] \times  \Lambda$.

\begin{remark}
\label{weak}
It is also possible to work directly on the Skorohod space $D$. However, since $D$ is not a separable Banach space, one has to consider weaker topologies on $D$, following a semi-group approach like the one developed by Peszat and Zabczyk in \cite{Peszat}.
\end{remark}

\subsection{Main theorem}

The main result of this section provides a nonlinear version of the Feynman–Kac formula in the case where the process $X^{t,\phi}$ has jumps and the generator of the backward dynamic $f$ depends on the past values of $Y$.

\begin{theorem}[Feynman-Kac formula]
\label{th5.1}
Let hypotheses $(A_1)-(A_8)$ and Assumption \ref{ass6.2} hold with condition \eqref{condition_KT} being verified. Then
\begin{equation}
\label{kac}
Y^{t,\phi} (s)= u (s, X^{t,\phi}) \, , \hspace{.8cm} \textsl{for all s }\in [0,T] ,
\end{equation}
for any $(t,\phi) \in [0,T] \times \Lambda $ , where $Y^{t, \phi}$ is the solution of the following BSDE
\begin{equation}
\label{backward_delayinY}
\begin{cases}
\displaystyle Y^{t, \phi} (s) = h   (X^{t, \phi})   + \int_s^T f \big( r,  X^{t, \phi}, Y^{t, \phi} (r), Z^{t, \phi} (r), \tilde{U}^{t, \phi} (r), Y_r^{t,\phi} \big) dr  \smallskip \\ 
\displaystyle  \qquad 
 -  \int_s^T  Z^{t, \phi} (r) dW(r)  -  \int_s^T  \int_{\mathbb{R}_0} U^{t, \phi} (r, z) \tilde{N} (dr, dz) \,  , \hspace{.3cm}  s \in [t,T] \smallskip \\ 
\displaystyle
Y^{t,\phi} (s) = Y^{s,\phi} (s) \, , \hspace{.3cm} Z^{t,\phi} (s) = U^{t,\phi} (s,z) = 0 \, , \hspace{2.2cm}  s \in [0,t] \, ,
\end{cases}
\end{equation}
and $u(t, \phi): [0,T] \times \Lambda \rightarrow \mathbb{R}$ is a deterministic function defined by
\begin{equation}
  \label{FK}
 u (t, \phi) = Y^{t,\phi} (t) \quad (t, \phi) \in [0,T] \times \Lambda \, .  
\end{equation}

Moreover, the solution of the forward backward equation \eqref{fbsde} is the quadruple $(X, Y, Z, U)$ taking values in $\Lambda \times \mathbb{R} \times \mathbb{R}^{ l} \times \mathbb{R}$
\end{theorem}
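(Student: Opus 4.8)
The plan is to follow the Pardoux--Peng scheme adapted to the path-dependent, delayed, jump setting; the three ingredients are the well-posedness of the candidate function $u$, the Markov (flow) property of the lifted forward equation, and a uniqueness argument for the backward equation that is forced to close up by the smallness condition \eqref{condition_KT}.

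\textbf{Step 1 (the function $u$).} First I would check that $u(t,\phi):=Y^{t,\phi}(t)$ is well defined: since $\mathcal{F}^{t}_{t}$ is generated by $W(r\wedge T)-W(t)$ and $N(r\wedge T,\cdot)-N(t,\cdot)$ for $r\le t$, it is $\mathbb{P}$-trivial, so $Y^{t,\phi}(t)$ is a.s. constant and $u(t,\phi)\in\mathbb{R}$ is deterministic. From Theorem \ref{theorem1} the map $t\mapsto(Y^{t,\phi},Z^{t,\phi},U^{t,\phi})$ is continuous into the spaces $\mathbb{S}_{0}^{2}(\mathbb{R})\times\mathbb{H}^{2}_{0}(\mathbb{R}^{l})\times\mathbb{H}_{0,N}^{2}(\mathbb{R})$, while Assumption \ref{ass6.2} together with the $L^{2}$-stability estimate underlying the fixed-point argument (this is exactly where \eqref{condition_KT} is used, cf.\ Remark \ref{small}) gives continuity of $\phi\mapsto(Y^{t,\phi},\dots)$; hence $u$ is jointly continuous on $[0,T]\times\Lambda$, and $(A_{4})$, $(A_{6})$ yield the growth bound $|u(t,\phi)|\le C(1+\|\phi\|_{T}^{p})$. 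Finally, because the forward equation \eqref{forward} and the supplementary conditions in \eqref{backward_delayinY} are non-anticipative, $u(t,\cdot)$ depends on $\phi$ only through $\phi|_{[0,t]}$.

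\textbf{Step 2 (flow property of $X$).} Working in the Delfour--Mitter lift \eqref{forward2}, the forward equation has non-anticipative, locally Lipschitz coefficients with unique solutions, hence enjoys the flow property: for $t\le s\le T$, writing $\Theta^{t,\phi}_{s}\in\Lambda$ for the (random, $\mathcal{F}^{t}_{s}$-measurable) path obtained by stopping $X^{t,\phi}$ at time $s$, one has $X^{s,\Theta^{t,\phi}_{s}}(r)=X^{t,\phi}(r)$ for all $r\in[0,T]$, $\mathbb{P}$-a.s. I would quote this from \cite{Oliva} (Prop.\ 2.6) and \cite{FlandoliZanco,MOTZ,sea1}, noting that it is precisely the Markovianity required for the Feynman--Kac argument.

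\textbf{Step 3 (identification).} Fix $(t,\phi)$ and set $g(s):=Y^{t,\phi}(s)-u(s,X^{t,\phi})$. For $s\le t$ the supplementary condition gives $Y^{t,\phi}(s)=Y^{s,\phi}(s)=u(s,\phi)$, while $X^{t,\phi}|_{[0,s]}=\phi|_{[0,s]}$, so non-anticipativity of $u$ yields $g(s)=0$. For $s\in[t,T]$ I would compare, on $[s,T]$, the triple $(Y^{t,\phi},Z^{t,\phi},U^{t,\phi})|_{[s,T]}$ with $(Y^{s,\Theta^{t,\phi}_{s}},Z^{s,\Theta^{t,\phi}_{s}},U^{s,\Theta^{t,\phi}_{s}})|_{[s,T]}$: by Step 2 they satisfy the \emph{same} backward equation driven by the \emph{same} forward process with the \emph{same} terminal value $h(X^{t,\phi})=h(X^{s,\Theta^{t,\phi}_{s}})$, the only possible discrepancy being the history of $Y$ fed to the delayed generator over $[s-\delta,s)$, which by the two supplementary conditions equals $g$ restricted to $[s-\delta,s)\cap[t,s)$. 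Applying the $L^{2}$-stability estimate for the delayed BSDE with jumps obtained inside the proof of Theorem \ref{theorem1}, together with the weighted-norm contraction valid under \eqref{condition_KT}, a Gr\"onwall-type argument forces $Y^{t,\phi}(r)=Y^{s,\Theta^{t,\phi}_{s}}(r)$ for all $r\in[s,T]$; evaluating at $r=s$ gives $Y^{t,\phi}(s)=u(s,\Theta^{t,\phi}_{s})=u(s,X^{t,\phi})$, the last equality by non-anticipativity of $u$. Since $u$ is defined only for deterministic arguments, the substitution of $\Theta^{t,\phi}_{s}$ for $\phi$ requires care: I would first prove the identity for simple (piecewise-constant) $\phi$, where $\Theta^{t,\phi}_{s}$ takes countably many values and one argues on each atom of $\mathcal{F}^{t}_{s}$, and then pass to general $\phi$ using the continuity of $u$ from Step 1 and the $L^{2}$-continuity of the forward flow $\phi\mapsto X^{t,\phi}$.

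\textbf{Step 4 (conclusion).} Combining Steps 1--3 we get $Y^{t,\phi}(s)=u(s,X^{t,\phi})$ for every $s\in[0,T]$ and every $(t,\phi)$, so \eqref{kac} and \eqref{FK} hold; substituting back into \eqref{backward_delayinY} shows that $(X^{t,\phi},Y^{t,\phi},Z^{t,\phi},U^{t,\phi})$ solves the full system \eqref{fbsde}, with $Y$ valued in $\mathbb{R}$, $Z$ in $\mathbb{R}^{l}$, $U$ in $\mathbb{R}$ and $X$ in $\Lambda$, which is the last assertion.

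\textbf{Main obstacle.} The delicate point is Step 3: unlike the undelayed Markovian case, the uniqueness identification cannot be localized in time, since the delayed generator couples $[s,T]$ with $[s-\delta,s)$, so one must absorb the history mismatch through the global weighted-norm contraction, which is exactly why the quantitative smallness condition \eqref{condition_KT} (now carrying an extra jump contribution compared with \cite{CDMZ}) is indispensable; the secondary technical nuisance is the random restart path $\Theta^{t,\phi}_{s}$, dealt with by the simple-function approximation.
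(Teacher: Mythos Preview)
Your strategy is genuinely different from the paper's. The paper does \emph{not} argue via the flow property plus a direct stability comparison; instead it runs the Picard iteration that underlies Theorem~\ref{theorem1} and proves by induction on $n$ that each iterate already admits a representation $Y^{n,t,\phi}(s)=u_n(s,X^{t,\phi})$ for some locally Lipschitz non-anticipative $u_n$. The point is that once $Y^{n,t,\phi}_r=\tilde u_n(r,X^{t,\phi})$ is a known deterministic functional of the forward path, the equation for $Y^{n+1,t,\phi}$ is a BSDE \emph{without} delay, so after lifting to the Delfour--Mitter space it falls exactly under Theorem~4.5 of \cite{Oliva}, which provides both the representation $Y^{n+1,t,\phi}(s)=u_{n+1}(s,X^{t,\phi})$ and the local Lipschitz regularity needed to iterate. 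The contraction from Theorem~\ref{theorem1} then gives $u_n\to u$ and \eqref{kac} in the limit. This buys two things your route has to earn separately: the random restart $\Theta^{t,\phi}_s$ never appears (everything is done with deterministic $\phi$ at each stage), and the coupling of $[s,T]$ with $[s-\delta,s)$ is dissolved because the delayed slot is frozen at every step.

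Your Step~3 is where I would push back. The ``Gr\"onwall-type argument'' does not close as written: from the stability estimate you get, for each fixed $s$, a bound of the form $\mathbb{E}\,e^{\beta s}|g(s)|^{2}\le c\cdot\mathbb{E}\sup_{r\in[t,s)}e^{\beta r}|g(r)|^{2}$ with $c<1$, but this pointwise inequality does not by itself force $g\equiv0$ (you cannot simply take the supremum over $s$ on the left, because for $r>s$ the restarted process $Y^{s,\Theta^{t,\phi}_s}(r)$ is not $u(r,X^{t,\phi})$). One clean way to repair this is a $\delta$-step induction---first close on $[t,t+\delta]$ where the external history lies in $[t-\delta,t)$ and $g$ vanishes there, then on $[t+\delta,t+2\delta]$, etc.---but that in turn requires a uniqueness statement for the delayed BSDE on a subinterval with \emph{prescribed} history on $[s-\delta,s)$, which is not literally Theorem~\ref{theorem1} and needs to be extracted from its proof. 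Combined with the random-initial-path issue you already flag, your Step~3 is workable but noticeably heavier than the paper's Picard scheme, which was designed precisely to avoid these two complications.
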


To prove the representation formula \eqref{FK}, we adapt the proof of Theorem 4.10 of \cite{CDMZ} by adding the contribution of $U$ and $\tilde{U}$, respectively modelling the process and the integral term connected to the jump component.

\begin{proof}

We follow the Picard iteration scheme, hence considering the iterative process of the BSDE with delayed generator driven by Lévy process described by
\begin{align*}
& Y^{n+1, t, \phi} (s) =   h (X^{t, \phi} )  \\
& \qquad + \mathlarger{\int}_s^T f \Big( r, X^{t, \phi}, Y^{n+1, t, \phi} (r), Z^{n+1, t, \phi} (r), \int_{\mathbb{R}} U^{n+1, t, \phi} (r,z) \lambda (z) \nu (dz), Y_r^{n,t,\phi} \Big) dr \\ 
& \qquad  -  \mathlarger{\int}_s^T  Z^{n+1, t, \phi} (r) dW(r) -  \mathlarger{\int}_t^T  \mathlarger{\int}_{\mathbb{R}} U^{n+1,t,\phi} (r,x) \tilde{N} (dr, dz) \, ,
\end{align*} 
with $Y^{0,t,\phi} \equiv  0 $, $Z^{0,t,\phi} \equiv  0 $ and $U^{0,t,\phi} \equiv  0 $.

Let us suppose that there exists a $\mathbb{F}$-progressively measurable functional $u_n: [0,T] \times \Lambda \rightarrow \mathbb{R}$ such that $u_n$ is locally Lipschitz and $Y^{n, t, \phi} (s) = u_n (s, X^{t, \phi})$ for every $t,s \in [0,T]$ and $\phi \in \Lambda$.

According to Theorem 4.10 in \cite{CDMZ}, we consider the delayed term
$$ Y_r^{n, t, \phi} (s)   = \big( Y^{n, t, \phi} (r + \theta) \big)_{\theta \in [-\delta, 0]}.$$

Since $Y^{n, t, \phi} (r + \theta) = u_n (r + \theta, X^{t, \phi} )$ if $r+\theta \geq 0$ and $Y^{n, t, \phi} (r + \theta) = Y^{n, t, \phi} (0) = u_n (0, X^{t, \phi})$ if $r + \theta < 0$, by defining
$$\tilde{u}_n (t, \phi) := \left( u_n (\mathds{1}_{[0,T]}(t + \theta) , \phi) \right)_{\theta \in [-\delta, 0]} \, , $$
the delayed term reads
$$Y_r^{n,t,\phi} = \tilde{u}_n (r, X^{t, \phi} )$$
and our equation becomes
\[%
\begin{cases}
\displaystyle X^{t,\phi}(s)=\phi(t)+\int_{t}^{s}b(r,X^{t,\phi})dr+\int_{t}%
^{s}\sigma(r,X^{t,\phi})dW(r)+\\
\qquad\displaystyle+\int_{t}^{s}\int_{\mathbb{R}\setminus\{0\}}\gamma
(r, X^{t,\phi},z)\tilde{N}(dr,dz)\,,\hspace{3.5cm}s\in\lbrack t,T]\\
X^{t,\phi}(s)=\phi(s)\,,\hspace{7.5cm}s\ \in\lbrack0,t]\smallskip\\
\displaystyle Y^{t,\phi}(s)=h(X^{t,\phi})+\int_{s}^{T}f\left(  r,X^{t,\phi
},Y^{t,\phi}(r),Z^{t,\phi}(r),\tilde{U}^{t,\phi}(r),\tilde{u}_{n}(r,X^{t,\phi})\right)  dr \smallskip \\
\qquad\displaystyle-\int_{s}^{T}Z^{t,\phi}(r)dW(r)-\int_{s}^{T}\int%
_{\mathbb{R}\setminus\{0\}}U^{t,\phi}(r,z)\tilde{N}(dr,dz)\,,\hspace
{0.3cm}s\in\lbrack t,T] \smallskip \\
Y^{t,\phi}(s)=Y^{s,\phi}(s)\,,\hspace{0.3cm}Z^{t,\phi}(s)=U^{t,\phi
}(s,z)=0\,,\hspace{2.5cm}s\in\lbrack0,t]\, .
\end{cases}
\]

We fix $n$ and we define $\psi:[0,T]\times M^{2}\times\mathbb{R}\times\mathbb{R}^{l%
}\times\mathbb{R}
\rightarrow\mathbb{R}$ as%
\begin{equation}
\label{psi}
\psi (t,\varphi,x,y,z,u):=f(t,\varphi^{t},x,y,z,u,\tilde{u}%
_{n}(r,\varphi^{t})) \, ,
\end{equation}

where $\tilde{h}:M^{2}\rightarrow\mathbb{R}$ is defined as follows%
$$
\tilde{h}(\varphi,x):=h(\varphi^{T}) \, .
$$
Since $u_{n}$ is locally
Lipschitz-continuous, one can show that $\psi$ is also locally Lipschitz in $\varphi$.

After fixing $\eta=\eta^{t,\phi}$, $x=x^{t,\phi}$ as in Eq. \eqref{eta}, we enlarge the state space for  in an equivalent way as%
\[%
\begin{cases}
\displaystyle \mathrm{d}X^{\tau,\eta,x}(t) & =\tilde{b}(t,X_{t}^{\tau,\eta,x},X^{\tau,\eta
,x}(t))\mathrm{d}t+\tilde{\sigma}(t,X_{t}^{\tau,\eta,x},X^{\tau,\eta
,x}(t))\mathrm{d}W(t)\\
\displaystyle
& \quad+\int_{\mathbb{R}_{0}}\tilde{\gamma}(t,X_{t}^{\tau,\eta,x},X^{\tau
,\eta,x}(t),z)\tilde{N}(\mathrm{d}t,\mathrm{d}z)\\ \displaystyle
\smallskip(X_{\tau}^{\tau,\eta,x},X^{\tau,\eta,x}(\tau)) & =(\eta,x)\\ \displaystyle
\mathrm{d}Y^{\tau,\eta,x}(t) & =\psi \left(  t,X_{t}^{\tau,\eta,x},X^{\tau
,\eta,x}(t),Y^{\tau,\eta,x}(t),Z^{\tau,\eta,x}(t),\tilde{U}^{\tau,\eta
,x}(t)\right)  \mathrm{d}t\\ \displaystyle
& \quad+Z^{\tau,\eta,x}(t)\mathrm{d}W(t)+\int_{\mathbb{R}_{0}}U^{\tau,\eta
,x}(t,z)\tilde{N}(\mathrm{d}t,\mathrm{d}z)\\
Y^{\tau,\eta,x}(T) & =\tilde{h}(X_{T}^{\tau,\eta,x},X^{\tau,\eta,x}(T)) \, , 
\end{cases}
\]
where the solutions of the two formulations $\left(  X^{\tau,\eta,x},Y^{\tau,\eta,x},Z^{\tau,\eta,x}\right)$ and $\left(
X^{t,\phi},Y^{t,\phi},Z^{t,\phi}\right)  $ coincide. 

We are assuming that enlarged coefficients $\tilde{b}$, $\tilde{\sigma}$,
$\tilde{\gamma}$ defined accordingly to Eq. \eqref{enlarged} satisfy conditions (A1)-(A2), see \cite{Oliva}, pp. 8-9,
 while $\psi$ and $\tilde{h}$ satisfies, respectively, (B1) and (B2), see \cite{Oliva}, p.24.
Then, exploiting Theorem 4.5 in \cite{Oliva}, we know that there exists a locally Lipschitz function $\tilde{u}:[0,T]\times M^{2}%
\rightarrow\mathbb{R}$ and a representation formula such that%
\[
Y^{\tau,\eta,x}(t)=\tilde{u}\left(  t,X_{t}^{\tau,\eta,x},X^{\tau,\eta,x}(t)\right)  .
\]
Let us define the following locally Lipschitz, non anticipative, functional $u_{n+1}:[0,T]\times\Lambda\rightarrow\mathbb{R}$ by%
\[
u_{n+1}(t,\phi):=\tilde{u}(t,\phi_{t},\phi(t)) \, ,
\]
where the time shifting $\phi_t $ is defined accordingly to Eq. \eqref{eta}, then:
\[
Y^{n+1,t,\phi}(s)=u_{n+1}(s,X^{t,\phi}),\ \forall s\in\lbrack0,T]\,.
\]

Notice that $\left( Y^{n,t, \phi} , Z^{n,t, \phi}, U^{n,t, \phi} \right)$ is the Picard iteration needed to construct the solution $\left( Y^{t, \phi} , Z^{t, \phi}, U^{t, \phi} \right)$ such that 
\begin{equation*}
\left( Y^{n+1,\cdot ,\phi },Z^{n+1,\cdot ,\phi }, U^{n+1,\cdot ,\phi }\right) =\Gamma (Y^{n,\cdot
,\phi },Z^{n,\cdot ,\phi }, U^{n,\cdot ,\phi })\,,
\end{equation*}%
where $\Gamma $ is the contraction defined in the proof of Theorem \ref{theorem1}. By applying Theorem \ref{theorem1}, we then have:%
\begin{equation*}
\lim_{n\rightarrow \infty }\mathbb{E}\big(\sup_{s\in \lbrack 0,T]}\big|%
Y^{n,t,\phi }(s)-Y^{t,\phi }(s)\big|^{2}\big)=0.
\end{equation*}%
Of course, $u_{n}(t,\phi )$ converges to $u(t,\phi ):=\mathbb{E} \, [ Y^{t,\phi
}(t)],$ for every $t\in \lbrack 0,T]$ and $\phi \in {\Lambda }$, hence implying  that the nonlinear Feynman--Kac formula $Y^{t,\phi }\left( s\right)
=u(s,X^{t,\phi })$ holds.\hfill $\smallskip $
\end{proof}

\section{Mild solution of the Kolmogorov Equation}
\label{sec5}

In this section, we prove the existence of mild solution of the Path-dependent Partial Integro-Differential Equation (PPIDE) Kolmorogov equation \eqref{Kolmogorov} showing a dependence both on a delayed term and on integral term modelling jumps. For the sake of completeness, let us 
start recalling the following notion of {\it mild solution}.

\begin{defn}
\label{mild_def}
A function $u:[0,T] \times M^2 \rightarrow \mathbb{R}$ is a mild solution to Eq. \eqref{Kolmogorov} if there exists $C>0$ and $m\geq 0$ such that for any $t \in [0,T]$ and any $\phi_1, \phi_2 \in M^2$, $u$ satisfies
\begin{equation}
\begin{array}{l}
| u(t, \phi_1) - u (t, \phi_2) | \leq C || \phi_1 - \phi_2||_{M^2} (1 + || \phi_1||_{M^2} + ||\phi_2||_{M^2} )^m \, ; \smallskip 
\\ 
\displaystyle  |u (t,0,0| \leq C \, ;
\end{array}
\label{mild1}
\end{equation}%
and the following equality holds true
\begin{equation}
\label{semigroup}
u(t,\phi) = P_{t,T} h(\phi)  + \int_t^T P_{t,s} \left[ 	f \left( \cdot, u(s, \cdot), \partial_x u (s, \cdot) \sigma(s, \cdot), (u(\cdot, \phi))_s), \mathcal{J} u (s, \cdot) \right)  (\phi) ds \right] \, ,
\end{equation}
for all $t \in [0,T]$ and $\phi \in M^2$, $(u(\cdot, \phi))_s)$ being the delayed term defined in Eq. \eqref{delayed}.
$\left( P_{t,s} \right)_{0 \leq t \leq s \leq T}$ corresponds the {\it Markov transition semigroup} corresponding
to the operator $\mathcal{L}$ introduced in Eq. \eqref{operatorL}, that in the lifted setting corresponds to the generator of the Markov semigroup associated to  forward equation appearing in Eq. \eqref{forward2}, for more details, see, e.g.,  \cite{generalized}).
\end{defn}

From \cite{Oliva} (Prop. 2.6), we know that the strong solution $X^{t,\phi} \in M^2$ of Eq. \eqref{forward2} is a {\it Markov process} in the sense that 
$$\mathbb{P} ( (X^{t,\phi} \in B | \mathcal{F}_s) = \mathbb{P} (( (X^{t,\phi} \in B | (X^{t,\phi} (s) = \phi (s) ) , \quad \mathbb{P}-a.s. \, ,$$
for all $0 \leq s \leq  T$ and for all Borel sets $B \in \mathcal{B} (M^2)$, the proof of this fundamental result being stated in several works, as, e.g. \cite{Banos} (Th. 3.9), \cite{Reib} (Prop. 3.3) or \cite{Peszat} (Sec. 9.6).

The next theorem represents the core result of this section.

\begin{theorem}[Existence]
\label{Kolmind}
Let Assumptions $(A_4) -  (A_8)$ hold true. Then, the path-dependent
partial integro-differential Eq. \eqref{Kolmogorov} admits a mild solution
$v$, in the sense of Definition \ref{mild_def}.
In particular, 
 we identify $v$ with $u$, i.e. the restriction over the set of càdlàg paths of the mild solution $v$ of Eq. \eqref{Kolmogorov}, namely 
\begin{equation}
    \label{restriction}
 v (t,\varphi,x):= u (t,\varphi^{t}) \, ,\end{equation}
where $\varphi^t$ is defined according to  Eq. \eqref{varphi} and $u$ coincides
with Eq. \eqref{FK}.
\end{theorem}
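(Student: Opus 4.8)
The plan is to combine the Feynman--Kac representation of Theorem \ref{th5.1} with the lifting machinery of Section \ref{sec5}, reducing the path-dependent PIDE \eqref{Kolmogorov} on $[0,T]\times\Lambda$ to the corresponding equation on the Delfour--Mitter space $[0,T]\times M^2$, where the existence of a mild solution is available from \cite{generalized} and \cite{Oliva}. Concretely, I would first set $u(t,\phi):=Y^{t,\phi}(t)$ as in \eqref{FK}; by Theorem \ref{theorem1} this is well defined and, by Theorem \ref{th5.1}, satisfies $Y^{t,\phi}(s)=u(s,X^{t,\phi})$ for all $s\in[0,T]$. The associated lifted function $v$ on $M^2$ is defined through \eqref{restriction} by $v(t,\varphi,x):=u(t,\varphi^t)$, using the extension operator \eqref{varphi}; one checks that on the image of $\Lambda$ under $\phi\mapsto(\eta^{t,\phi},x^{t,\phi})$ the two objects agree, so that any regularity or mild-solution property proved for $v$ transfers to $u$.

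The second step is to verify the two estimates in \eqref{mild1}. The bound $|v(t,0,0)|\le C$ follows from the a priori estimate on $Y$ in $\mathbb{S}^2_t$ (Theorem \ref{theorem1}) applied with the zero path, using $(A_4)(iv)$ and $(A_6)$. The local Lipschitz estimate with polynomial growth is the heart of the regularity claim: here I would invoke the stability of the BSDE \eqref{backward} with respect to the initial datum $\phi$, using the local Lipschitz hypotheses $(A_7)$--$(A_8)$ on $b,\sigma,\gamma,f$ together with the moment bounds on $X^{t,\phi}$ coming from $(A_3)$ and $(A_7)$; this is exactly the argument by which \cite{generalized} obtains that the value function is locally Lipschitz in $x$ with polynomial growth, now carried out in the lifted variables $(\varphi,x)\in M^2$. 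The delayed term $(u(\cdot,\phi))_s$ enters only through the $L^2([-\delta,0])$-dependence, controlled by $(A_4)(iii)$ and the smallness condition \eqref{condition_KT}, so it does not worsen the growth exponent.

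The third step is the semigroup identity \eqref{semigroup}. I would start from the backward equation \eqref{backward_delayinY} written in the enlarged formulation used in the proof of Theorem \ref{th5.1}, take conditional expectation $\mathbb{E}[\,\cdot\mid\mathcal{F}_t]$, and use the Markov property of the lifted forward process $X^{t,\eta,x}$ (quoted from \cite{Oliva} Prop. 2.6 and \cite{Banos} Th. 3.9) to identify $\mathbb{E}\big[\,h(X^{T,\phi})\mid\mathcal{F}_t\big]=P_{t,T}h(\phi)$ and, for the generator term, $\mathbb{E}\big[f(r,X^{r,\phi},\dots)\mid\mathcal{F}_t\big]=P_{t,r}\big[f(\cdot,u(r,\cdot),\partial_x u(r,\cdot)\sigma(r,\cdot),(u(\cdot,\phi))_r,\mathcal{J}u(r,\cdot))\big](\phi)$. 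This last identification requires recognizing $Z^{t,\phi}(r)$ as $\partial_x u(r,X^{t,\phi})\sigma(r,X^{t,\phi})$ and $\tilde U^{t,\phi}(r)$ as $\mathcal{J}u(r,X^{t,\phi})$, which is precisely the content of the generalized directional gradient identification in \cite{generalized}; the martingale parts vanish under the conditional expectation, and Fubini in $r$ gives \eqref{semigroup}.

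The main obstacle I expect is the third step: justifying that the integrand $f(\cdot,u(s,\cdot),\partial_x u(s,\cdot)\sigma(s,\cdot),(u(\cdot,\phi))_s,\mathcal{J}u(s,\cdot))$ is well defined pointwise and measurable enough for $P_{t,s}$ to act on it, which hinges on the generalized directional gradient theory and on the nontrivial fact that $\mathcal J u$ (defined via \eqref{jump} with the shift $\phi+\gamma(t,\phi,z)$) coincides $\mathbb{P}$-a.s. with $\tilde U^{t,\phi}$. I would handle this along the Picard approximation already set up in the proof of Theorem \ref{th5.1}: each $u_n$ is locally Lipschitz, the identity \eqref{semigroup} holds at level $n$ with $(u_{n-1}(\cdot,\phi))_s$ in the delayed slot, and then pass to the limit $n\to\infty$ using the contraction from Theorem \ref{theorem1} (uniform in $t$ by \eqref{condition_KT}) together with dominated convergence, the polynomial-growth bounds furnishing the dominating function.
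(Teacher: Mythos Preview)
Your outline is broadly sound, but it takes a different route from the paper's proof and in doing so sidesteps the key simplification the paper exploits. You propose to verify Definition~\ref{mild_def} directly: first the local-Lipschitz/polynomial-growth bounds \eqref{mild1}, then the semigroup identity \eqref{semigroup}, handling the latter by identifying $Z^{t,\phi}(r)$ with $\partial_x u(r,X^{t,\phi})\sigma(r,X^{t,\phi})$ and $\tilde U^{t,\phi}(r)$ with $\mathcal{J}u(r,X^{t,\phi})$ via the generalised directional gradient and passing to the limit in the Picard scheme. The paper instead never verifies \eqref{semigroup} from scratch. Its central move is a \emph{delay-elimination} substitution: having already established the Feynman--Kac identity $Y^{t,\phi}(s)=u(s,X^{t,\phi})$ in Theorem~\ref{th5.1}, it rewrites the delayed argument as $Y_r^{t,\phi}=\tilde u(r,X^{t,\phi})$ with $\tilde u(t,\phi):=(u(\mathds{1}_{[0,T]}(t+\theta),\phi))_{\theta\in[-\delta,0]}$. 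Plugging this back into \eqref{backward} yields a BSDE whose generator $\psi$ (as in \eqref{psi}) no longer contains any delay. After lifting to $M^2$, this BSDE is exactly in the framework of \cite{Oliva}, so Theorem~4.5 there \emph{directly} delivers the mild solution $v$ on $[0,T]\times M^2$, together with the required regularity and the representation $Y^{t,\eta,x}(s)=v(s,X_s^{t,\eta,x},X^{t,\eta,x}(s))$; the identification $u(t,\phi)=v(t,\eta^{t,\phi},x^{t,\phi})$ then finishes the proof in a few lines.

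What this buys is substantial: the paper avoids entirely the delicate a.s.\ identifications of $Z$ and $\tilde U$ you flag as the main obstacle, and it does not need to redo any Picard-limit argument, since both are already packaged inside \cite{Oliva}~Theorem~4.5 for non-delayed generators. Your route would work in principle, but it re-proves machinery that the paper simply cites once the delay has been frozen; moreover, without the substitution $Y_r^{t,\phi}\mapsto \tilde u(r,X^{t,\phi})$ the lifted BSDE does \emph{not} fit the hypotheses of \cite{Oliva} (which assumes no delay), so you cannot invoke that result as stated and really are forced into the direct verification you outline. If you want to shorten your argument, insert the delay-elimination step right after you have $u$ from Theorem~\ref{th5.1}; then the rest collapses to a citation.
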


\begin{proof}
Let us consider the backward component of the  FBSDE described in Eq. \eqref{fbsde} for $s \in [t,T]$

\begin{small}
\begin{equation}
\label{proof1}
\begin{array}{l}
    \displaystyle Y^{t,\phi}(s)=h(X^{t,\phi})+\int_{s}^{T}f\left(  r,X^{t,\phi
},Y^{t,\phi}(r),Z^{t,\phi}(r),\int_{\mathbb{R}_{0}}U^{t,\phi}(r,z)\lambda
(z)\nu(dz),Y_r^{t, \phi})\right)  dr\\
    \displaystyle \qquad\displaystyle-\int_{s}^{T}Z^{t,\phi}(r)dW(r)-\int_{s}^{T}\int%
_{\mathbb{R}\setminus\{0\}}U^{t,\phi}(r,z)\tilde{N}(dr,dz)\,,\hspace
{2cm}s\in\lbrack t,T] \, .
\end{array}
\end{equation}
\end{small}

 By Theorem \ref{theorem1}, there exists a representation formula for the solution of the BSDE \eqref{proof1}. Moreover, by means of Eq. \eqref{FK}, we can write the delayed term $Y_r$ as a function of the path of  solution of the forward dynamic $X^{t, \phi}$ and, thus, we obtain
$$\tilde{u} (t, \phi) := \left( u (\mathds{1}_{[0,T]}(t + \theta) , \phi) \right)_{\theta \in [-\delta, 0]} \, ,$$

that we can plug into \eqref{proof1}, leading to

\[%
\begin{cases}
\displaystyle Y^{t,\phi}(s)=h(X^{t,\phi})+\int_{s}^{T}f\left(  r,X^{t,\phi
}, Y^{t,\phi}(r),Z^{t,\phi}(r),\tilde{U}^{t,\phi}(r), \tilde{u} (r,X^{t,\phi}) \right)  dr\\
\qquad\displaystyle-\int_{s}^{T}Z^{t,\phi}(r)dW(r)-\int_{s}^{T}\int%
_{\mathbb{R}\setminus\{0\}}U^{t,\phi}(r,z)\tilde{N}(dr,dz)\,,\hspace
{2cm}s\in\lbrack t,T]\\
Y^{t,\phi}(s)=Y^{s,\phi}(s)\,,\hspace{1.5cm}Z^{t,\phi}(s)=U^{t,\phi
}(s,z)=0\,,\hspace{3cm}s\in\lbrack0,t],
\end{cases} \, .
\]

At this point, we enlarge the state space going through $M^2$ coefficients analogously to the proof of Theorem \ref{theorem1}, obtaining
\begin{equation}
\begin{array}{l}
\displaystyle
Y^{t, \eta, x} (s) = h (X_T^{s,\eta,x}, X ^{s,\eta,x} (T)) \\
\displaystyle \qquad +  \int_s^T \psi \big( r, X_r^{t, \eta, x},  X^{t, \eta, x} (r) , Y^{t, \eta, x} (r), Z^{t, \eta, x} (r), \tilde{ U}^{t, \eta, x} (r),  \big) dr  \\ 
 \displaystyle \qquad  -  \int_s^T  Z^{t, \eta, x} (r) dW(r) -  \int_t^T  \int_{\mathbb{R}_0} U^{t, \eta, x} (r, z) \tilde{N} (dr, dz) \,  , 
 \end{array}
\end{equation}
recalling the map $\psi$ defined in Eq. \eqref{psi}, we may obtain a BSDE in the same setting of \cite{Oliva}. Taking the expectation and exploiting the representation formulas described by Theorem 4.5 in \cite{Oliva}, we know that there exists a process
$$
Y^{t, \eta, x} (s) \, =  \quad v (s, X_s^{t, \eta, x}, X^{t, \eta, x}(s) ) 
$$
with $v: [0,T] \times L^2 ([0,T], \mathbb{R}^d) \times \mathbb{R}^d \rightarrow \mathbb{R}$, and we conclude identifying  $u$ with the restriction of $v$ on the set of càdlàg paths $\Lambda$.



\end{proof}

\begin{remark}[Uniqueness]
Let us underline that the aforementioned lack of uniqueness can be overcome by considering the problem of uniqueness of the mild solution in the case of $f$ being independent of the term $%
\left( u(\cdot ,\phi )\right) _{t}\,$, with passages similar to the proof of \ref{th5.1}.
Indeed, let us take two mild solutions $u^{1}$ and $u^{2}$ of the
path-dependent PDE \eqref{Kolmogorov}. We define%
\begin{equation*}
f^{i}(t,\phi ,y,z,w):=f(t,\phi ,y,z,w,\left( u^{i}\left( \cdot ,\phi \right)
\right) _{t})\,,\quad i=\overline{1,2}\,.
\end{equation*}%
Using these drivers we can consider the following BSDEs:%
\begin{equation}
\begin{array}{l}
\displaystyle Y^{t,\phi }\left( s\right) =h(X^{t,\phi })+\int_{s}^{T}f^{i}(r,X^{t,\phi
},Y^{t,\phi }\left( r\right) ,Z^{t,\phi }\left( r\right), \tilde{U}^{t,\phi }\left( r\right)
)dr-\int_{s}^{T}Z^{t,\phi }\left( r\right) dW\left( r\right) \smallskip \\ \qquad \displaystyle - \int_s^T \int_{\mathbb{R}_0} U^{t, \phi } (r,z) \tilde{N} (dr, dz) \smallskip \,,\quad i=%
\overline{1,2}\,, 
\end{array}
\label{BSDE_delayed 3}
\end{equation}%
for which there exist unique solutions $\left( Y^{i,t,\phi }\, , \, Z^{i,t,\phi
} \, , U^{i,t,\phi
}  \,  \right) \in
\mathbb{S}_t^2 (\mathbb{R}) \times \mathbb{H}^2_t (\mathbb{R}^{l})\times \mathbb{H}_{t,\nu}^2  (\mathbb{R}) $
for
$ i=\overline{1,2}\,.$

By Theorem \ref{th5.1} we see that%
\begin{equation*}
Y^{i,t,\phi }\left( s\right) =v^{i}(s,X^{t,\phi }),\quad \text{for all }s\in %
\left[ 0,T\right] ,\quad \text{a.s.,}
\end{equation*}%
for any $\left( t,\phi \right) \in \left[ 0,T\right] \times \Lambda %
,$ where $v^{i}:\left[ 0,T\right] \times \Lambda \rightarrow 
\mathbb{R}$ is (the restriction) of the solution of \eqref{FK}, i.e.%
\begin{equation*}
v^{i}(t,\phi ):=Y^{i,t,\phi }\left( t\right) ,\;(t,\phi )\in \lbrack
0,T]\times \Lambda \,.
\end{equation*}%
Hence, by Theorem \ref{Kolmind}, we obtain that the functions $v^{i}$
are solutions of the PDE of type \eqref{Kolmogorov}, but without the delayed terms 
$\left( v^{i}\left( \cdot ,\phi \right) \right) _{t}:$%
\begin{equation}
\left\{ 
\begin{array}{r}
-\partial _{t}v^{i}(t,\phi )-\mathcal{L}v^{i}(t,\phi )-f^{i}(t,\phi
,v^{i}(t,\phi ),\partial _{x}v^{i}\left( t,\phi \right) \sigma (t,\phi
), \mathcal{J} v^i (t, \phi))=0,\smallskip \\ 
\multicolumn{1}{l}{v^{i}(T,\phi )=h(\phi ),\quad i=\overline{1,2}\,.}%
\end{array}%
\right.  \label{PDKE 3}
\end{equation}%
Since $u^{i}$ is also solution to equation \eqref{Kolmogorov}, by using Theorem
4.5 from \cite{Oliva} we know that%
\begin{equation*}
u^{i}(t,\phi )=v^{i}(t,\phi ),\quad \left( t,\phi \right) \in \left[ 0,T%
\right] \times \Lambda \,,\quad i=\overline{1,2}\,.
\end{equation*}%
By asking that $f$ satisfies assumption $(A_8)$,
we know that
\begin{equation*}
Y^{i,t,\phi }\left( s\right) =v^{i}(s,X^{t,\phi })=u^{i}(s,X^{t,\phi }),\ i=%
\overline{1,2}\,,
\end{equation*}
BSDEs (\ref{BSDE_delayed 3}) become a single equation,%
\begin{equation}
\begin{array}{l}
\displaystyle Y^{i,t,\phi }\left( s\right) =h(X^{t,\phi })+\int_{s}^{T}f(r,X^{t,\phi
},Y^{i,t,\phi }\left( r\right) ,Z^{i,t,\phi }\left( r\right)
, \tilde{U}^{i,t,\phi }\left( r\right), Y_{r}^{i,t,\phi })dr \smallskip \\ \qquad \displaystyle -\int_{s}^{T}Z^{i,t,\phi }\left( r\right) dW\left(r\right)  - \int_s^T \int_{\mathbb{R}_0} U^{t, \phi } (r,z) \tilde{N} (dr, dz)
 \,,  
\end{array}\label{BSDE_delayed 4}
\end{equation}%
with $i=\overline{1,2}\,,$ for which we have uniqueness from Theorem \ref{theorem1}.

Hence $Y^{1,t,\phi }=Y^{2,t,\phi }$ and therefore%
\begin{equation*}
u^{1}(t,\phi )=Y^{1,t,\phi }\left( t\right) =Y^{2,t,\phi }\left( t\right)
=u^{2}(t,\phi ).
\end{equation*}
\end{remark}

\section{Financial Application}

\label{SEC:FA}

In this section we provide a financial application moving from the model studied in, e.g.  \cite{CM}, or
\cite{EKPQ}. We consider a generalization of the so-called Large Investor Problem, where a so called \textit{%
large investor} wishes to invest on a given market, buying or selling a stock.  The investor has the peculiarity that his actions on the market can affect the stock price. We further generalize previous results in \cite{ CDMZ} \cite{EKPQ} or  by asking, in addition to a small time delay between the action of the large investor and the reaction of the market, the dynamic of the risky asset to be driven by a Poisson random measure.


More specifically, let us denote the investor's strategy by $\pi$ and the investment portfolio by $X^\pi$. Then, according to the theory previously developed, here $X^\pi$ is modeled as a càdlàg process and its past $X^\pi_r$ may affects directly the drift $\mu$ of the stock rate. Consequently we consider the following dynamics
\begin{equation}
\left\{ 
\label{lip1}
\begin{array}{l}
\displaystyle\frac{dS_0(t)}{S_0(t)}=r(t,X^\pi (t), X^\pi_t)dt\;, \smallskip \\ 
\displaystyle S_0(0)=1\;,\smallskip \\ 
\displaystyle\frac{dS(t)}{S(t)}=\mu \left( t,X^\pi (t), X^\pi_t \right) dt+\sigma (t)dW(t) + \int_{\mathbb{R}_0} \gamma (t,z) \tilde{N} (dt,dz) , \smallskip \\ 
\displaystyle  S(0)=s_{0}>0 \;,
\end{array}%
\right. 
\end{equation}%
where $r$, $\mu$, $\sigma$ and $\gamma$ are $\mathbb{F}^{W, \tilde{N}}$-predictable processes, being $\mathbb{F}^{W, \tilde{N}}$ the natural filtration associated to the Brownian motion $W$, that are required to be adapted to the Poisson random measure $\tilde{N}$, with compensator is defined according to Eq. \eqref{poisson}.
The initial datum $s_0$ belongs to the class of càdlàg stochastic process with values in $\mathbb{R}$.

The total amount of the portfolio of the large investor is described by
$$
\displaystyle dX^\pi (t)= \pi(t) \frac{dS(t)}{S(t)} + (X^\pi (t)- \pi(t))\frac{dS_0(t)}{S_0(t)}dt \, ,
$$
where $X^\pi(t)$ and $\pi(t)$ the present state of the processes with values in $\mathbb{R}$.

The goal is to find an admissible replicating strategy $\pi \in \mathcal{A}$ for a claim $\hat{F} (S(T))$.
We have that the portfolio $X$ evolves according to%
\begin{equation*}
\begin{array}{l}
\displaystyle dX^\pi(t)=\frac{\pi (t)}{S\left( t\right) }dS\left( t\right) +%
\frac{X^\pi\left( t\right) -\pi (t)}{S_0\left( t\right) }dS_0\left( t\right) \smallskip
\\ 
\displaystyle \qquad =\pi (t)\cdot \left[ \mu \left( t,X^\pi, X^\pi_r \right) dt+\sigma (t )dW(t) + \int_{\mathbb{R}_0} \gamma (t,z) \tilde{N}(dt, dz) \right] \smallskip
\\ 
\displaystyle \qquad  +\left[ X^\pi\left( t\right)
-\pi (t)\right] \cdot r(t,X^\pi (t), X_r^\pi )dt\,,%
\end{array}%
\end{equation*}%
where,  at any time $t\in \lbrack 0,T]$ by $%
\pi (t),$ represents the amount invested in the risky asset $S$, while by $X(t)-\pi (t),$
is the amount invested in the riskless bond $S_0$, and
$X^\pi(T)=h\left( S\right)$ represents the final condition.

Hence, for $t\in \left[ 0,T\right]$, we have
\begin{small}
\begin{equation}
\label{lip2}
\begin{array}{l}
\displaystyle X(t)=h\left( S\right) +\int_{t}^{T}F\left( s,X^\pi (s), X^\pi_s, \pi(s) \right) ds-\int_{t}^{T} Z(s)dW(s)  - \int_t^T \int_{\mathbb{R}_0} U (s,z)  \tilde{N} (ds, dz) ,  
\end{array}
\end{equation}
\end{small}
where, for the sake of readability, we denoted:%
\begin{small}
\begin{equation}
\label{lip_coeff}
    \begin{array}{l}
\displaystyle {F}\left( s,X^\pi (s) , X^\pi_s, \pi (s) \right) :=- r(s,X^\pi (s), X^\pi_s)
\left( X^\pi(s)-\pi (s)\right)  -\pi
(s) \mu \left( s,X^\pi (s), X^\pi_s \right)  \, ; \smallskip\\
Z(s) := \pi (s)\sigma
(s) \, ; \smallskip \\
U(s,z) := \pi(s) \gamma (s,z) \, . \smallskip
   \end{array}
\end{equation}%
\end{small}

We then impose that 
functions $r,\mu$, $\sigma$ and $\gamma$ are such that the function $F: [0,T]         \times \mathbb{R}         \times L^2 \left([-\delta, 0];     \mathbb{R}\right) \rightarrow \mathbb{R}$ satisfies assumptions $\mathrm{(A}_{4}\mathrm{)}$--$\mathrm{(A}_{6}\mathrm{).%
}$

Furthermore, we introduce a simplification to decouple the BSDE  \eqref{lip2} from the stock forward dynamic \eqref{lip1}, as to fit the setting of Theorem \ref{theorem1}. Indeed, instead of having $h(S)$  depending on the whole path of $S$, we 
consider the terminal function $\bar{h}$ that explicitly depends only on $W$, $\tilde{N}$, $Z$ and $U$. Thus, the functional $\bar{h}$ encodes the information at terminal time $T$ of the forward path of $S$ and it reads
\begin{equation*}
\bar{h}\left( W,X,Z, U\right) :=\tilde{h}(W,X,\sigma ^{-1}\left( \cdot
 \right), U)\,,
\end{equation*}%
while satisfying a Lipschitz condition:%
\begin{equation}
\begin{array}{l}
\left\vert \bar{h}\left( x,y,z, u\right) -\bar{h}\left( x,y^{\prime },z^{\prime
}, u^\prime \right) \right\vert ^{2}\leq \smallskip \\
\displaystyle \quad L\left[ \int_{0}^{T}\left\vert y\left(
s\right) -y^{\prime }\left( s\right) \right\vert
^{2}ds+\int_{0}^{T}\left\vert z\left( s\right) -z^{\prime }\left( s\right)
\right\vert ^{2}ds +\int_{0}^{T}\left\vert u\left( s\right) -u^{\prime }\left( s\right)
\right\vert ^{2}ds\right] .
\end{array}
\label{hlip}
\end{equation}

We underline that assuming suitable conditions
on $\mu $, $\sigma $ and $\gamma$, e.g. asking $\mu$ bounded and $\sigma $, $\gamma$ constant, then 
$\bar{h}$ satisfies $\mathrm{(A}_{6}\mathrm{)}$ and the
condition in Eq. \eqref{hlip}.

Therefore we can rewrite (\ref{lip2}) as%
\begin{small}
\begin{equation}
\begin{array}{l}
\displaystyle X(t)=\bar{h}\left( W,X,Z, U\right) +\int_{t}^{T}{F}\left( s, X(s) ,Z(s) ,X_{s}\right) ds-\int_{t}^{T}Z\left( s\right)
dW(s) \smallskip \\
\displaystyle \qquad - \int_t^T \int_{\mathbb{R}_0} U (s,z)  \tilde{N} (ds, dz) 
\quad t\in \left[ 0,T\right] \smallskip \, , \label{EQN:LargInvRe}
\end{array}
\end{equation}%
\end{small}
and we deduce from Theorem \ref{theorem1} that, under proper assumptions on
the coefficients, there exists a unique solution $\left( X,Z, U\right) $ to
equation \eqref{EQN:LargInvRe}.
Consequently, (\ref{lip2}) admits a unique solution $\left(
X,\pi\right) $, where 
\begin{equation*}
\pi\left( s\right) :=Z\left( s\right) \sigma^{-1}(s,X(s),X_{s})\, .
\end{equation*}

In order to obtain the connection with the associated PDE we first consider the
decoupled forward--backward stochastic system:%
\begin{small}
\begin{equation}
\left\{ 
\begin{array}{l}
\displaystyle\bar{W}^{t,\phi }\left( s\right) =\phi \left( t\right)
+\int_{t}^{s}dW\left( r\right) + \int_{\mathbb{R}} \int_t^s d\tilde{N}(r,z) ,\quad s\in \left[ t,T\right] ,\smallskip \\ 
\displaystyle\bar{W}^{t,\phi }\left( s\right) =\phi \left( s\right) ,\quad
s\in \lbrack 0,t),\smallskip \\ 
\displaystyle X^{t,\phi }\left( s\right) =\bar{h}(\bar{W}^{t,\phi
},X^{t,\phi },Z^{t,\phi }, U^{t, \phi})+\int_{s}^{T}\bar{F}(r,X^{t,\phi }\left( r\right)
,Z^{t,\phi }\left( r\right) ,X_{r}^{t,\phi })dr\smallskip \\ 
\displaystyle\quad \quad \quad \quad \quad \quad \quad \quad \quad -\int_{s}^{T}Z^{t,\phi }(r)dW\left( r\right)  - \int_s^T \int_{\mathbb{R}_0} U^{t, \phi} (r,z) \tilde{N} (dr, dz) 
, \quad s\in \left[ t,T\right] ,\smallskip \\ 
\displaystyle X^{t,\phi }\left( s\right) =X^{s,\phi }\left( s\right) ,\quad
\pi ^{t,\phi }\left( s\right) =0,\quad s\in \lbrack 0,t).%
\end{array}%
\right.
\label{lipbsde}
\end{equation}%
\end{small}
where the BSDE coefficients are defined according to \eqref{lip_coeff}.

If Assumptions \ref{ass2} hold for the coefficients, we can exploit \ref{theorem1} to derive that there exists a unique solution $\left( X^{t,\phi },\pi
^{t,\phi }\right) _{\left( t,\phi \right) \in \left[ 0,T\right] \times 
{\Lambda }}$ of the  system
\eqref{lipbsde}.

By Theorem \ref{th5.1}, for the solution 
$(X,Z,U)$ of the backward component in the above system, we can write that
for every $\left( t,\phi \right) \in \left[ 0,T\right] \times {%
\Lambda },$
\begin{equation*}
X^{t,\phi }\left( s\right) =u(s,\bar{W}^{t,\phi }),\quad \text{for all }s\in %
\left[ t,T\right] ,
\end{equation*}%
where, exploiting Theorem \ref{Kolmind}, $u\left( t,\phi \right) =X^{t,\phi }\left( t\right)$ is a specific restriction of the mild
solution, according to Eq. \eqref{restriction}, of the following path-dependent PDE:%
\begin{equation*}
\left\{ 
\begin{array}{l}
\displaystyle\partial _{t}u(t,\phi )+\frac{1}{2}\partial _{xx}^{2}u(t,\phi )+%
\bar{F}(t,u(t,\phi ),\partial _{x}u\left( t,\phi \right) ,\left( u\left(
\cdot ,\phi \right) \right) _{t})=0,\quad \phi \in \Lambda ,\;t\in
\lbrack 0,T),\smallskip \\ 
\displaystyle u(T,\phi )=\bar{h}(\phi ,\left( u\left( \cdot ,\phi \right)
\right) ),\quad \phi \in \Lambda.%
\end{array}%
\right.
\end{equation*}%


\section{Conclusions and Future Development}

The core result of this paper relies in deriving a stochastic representation for the solutions of a non-linear PDE and associating the solution of the PDE to a FBSDE with jumps and time-delayed generator. The presence of jumps both in the forward and in the backward dynamic and, moreover, the dependence of the generator on a time-delayed coefficient represents the main aspect of novelty arising in the analysis of the FBSDE system. Furthermore, we present an application for a large investor problem admitting a jump diffusion dynamic.
Throughout the article we mention some 
 possibilities to generalize the setting of our equations such as considering a dependence of $f$ also on a delayed term for the processes $Z$ and $U$ (see Remark \ref{delta} for more details) or in-depth analyzing the choice of a weaker topology (Remark \ref{weak}). Another possible modelling choice deals with considering a further delay term affecting the forward process, see \cite{ma} for more details. Furthermore, it might deserve attention to investigate a discretization scheme for this equation, recalling the work in \cite{be}, for the considered equations as to obtain a numerical algorithm based on Neural Networks methods to efficiently compute an approximated  solution for the considered FBSDE.

\begin{appendix}
	
	\section*{Proof of Theorem \ref{theorem1}}
	\label{AppA}

\begin{proof}
\label{proof}

The existence and the uniqueness are obtained by the Banach fixed point theorem. We consider $\phi$ fixed in $\Lambda$ and we define the map $\Gamma$ on $\mathcal{A}$ with $\mathcal{A} := \mathcal{C} \big( [0,T] \, ; \, \mathbb{S}_0^2 (\mathbb{R}) \big)$.

For $R \in \mathcal{A}$, we define $\Gamma (R): = Y$, where, for $t \in [0,T]$, the triple of adapted processes $\big( Y^t (s),Z^t(s),U^t (s,z) \big)_{s \in [t,T]}$ is the unique solution of the following BSDE

\begin{multline}
\label{BSDE iterative 1}
Y^t (s)  = h (X^{t, \phi}) +  \int_s^T F (r, X^{t, \phi}, Y^t (r), Z^t (r), \tilde{U}^{t} (r), R_r^t) dr   \\
-  \int_s^T  Z^t (r) dW(r)  -  \int_s^T  \int_{\mathbb{R}} U^{t} (r, z) \tilde{N} (dr, dz) \,  , \hspace{.3cm}  s \in [t,T]\ \, .
\end{multline}

For $s \in [0,t]$ we prolong the solution by taking $Y^t (s):= Y^s (s)$ and $Z^t (s) = U^{t,\phi} (s) := 0$.

{\em Step 1.} Let us first show that $\Gamma$ takes values in the Banach spaces $\mathcal{A}$. We take $R \in \mathcal{A}$ and we will prove that $Y  := \Gamma (R) \in \mathcal{A}$. Thus, for every $t \in [0,T]$ we have to show that
$$ Y^t \in  \, \mathbb{S}_0^2 (\mathbb{R}) \, ,$$
and that the application
$$[0,T] \ni t \mapsto Y^t \in \mathbb{S}_0^2 (\mathbb{R}) \, , $$
is continuous.

Let $t\in\left[ 0,T\right] $ be fixed and $t^{\prime}\in\lbrack0,T]$; with
no loss of generality, we will suppose that $t<t^{\prime}$ and $t^{\prime
}-t<\delta$.

Concerning the solution of the BSDE defined in \eqref{BSDE iterative 1}, we obtain the following estimate
\begin{equation*}
\begin{array}{l}
\mathbb{E}\big(\sup_{s\in \left[ 0,T\right] }|Y^{t}\left( s\right)
-Y^{t^{\prime }}\left( s\right) |^{2}\big)\smallskip \\ 
\leq \mathbb{E}\big(\sup_{s\in \left[ 0,t^{\prime }\right] }|Y^{t}\left(
s\right) -Y^{t^{\prime }}\left( s\right) |^{2}\big)+\mathbb{E}\big(%
\sup_{s\in \left[ t^{\prime },T\right] }|Y^{t}\left( s\right) -Y^{t^{\prime
}}\left( s\right) |^{2}\big)\smallskip \\ 
\leq 2\mathbb{E}\big(\sup_{s\in \left[ t,t^{\prime }\right] }|Y^{t}\left(
s\right) -Y^{t}\left( t\right) |^{2}\big)+2\mathbb{E}\big(\sup_{s\in \left[
t,t^{\prime }\right] }|Y^{t}\left( t\right) -Y^{s}\left( s\right) |^{2}\big)%
\smallskip \\ 
\quad +\mathbb{E}\big(\sup_{s\in \left[ t^{\prime },T\right] }|Y^{t}\left(
s\right) -Y^{t^{\prime }}\left( s\right) |^{2}\big).%
\end{array}%
\end{equation*}

We start by proving that
\begin{equation*}
\mathbb{E}\big[\sup_{s\in \left[ t,t^{\prime }\right] }|Y^{t}\left( s\right)
-Y^{t}\left( t\right) |^{2}\big] \rightarrow 0\,,
\end{equation*}%
as $t^{\prime }\rightarrow t$. By plugging the explicit solution and applying Doob's inequality, we get 
\begin{align*}
\mathbb{E}\big(\sup_{s\in \left[ t,t^{\prime }\right] } & |Y^{t}\left( s\right)
-Y^{t}\left( t\right) |^{2}\big)  = \mathbb{E}\Big[\sup_{s\in \left[ t,t^{\prime }\right] }  \Big|  \int_t^s F (r, X^{t, \phi}, Y^t (r), Z^t (r), \tilde{U}^{t} (r), R_r^t) dr  + \\
& -  \int_t^s  Z^t (r) dW(r)  -  \int_t^s  \int_{\mathbb{R}} U^{t} (r, z) \tilde{N} (dr, dz)  \Big|^2 \Big] \\
& \leq 3 \mathbb{E} \left[  \int_t^{t^\prime}  \left| F (r, X^{t, \phi}, Y^t (r), Z^t (r), \tilde{U}^{t} (r), R_r^t ) \right|^2 dr  \right] + \\
&  + 3 \mathbb{E} \left[ \sup_{s\in \left[ t,t^{\prime }\right] }  \left| \int_t^{s}  Z^t (r) dW(r) \right|^2 \right] + 3 \mathbb{E} \left[ \sup_{s\in \left[ t,t^{\prime }\right] }  \left|  \int_t^{s} \int_{\mathbb{R}} U^{t} (r, z) \tilde{N} (dr, dz)  \right|^2 \right] \\
& \leq 3 \mathbb{E} \left[  \int_t^{t^\prime}  \left| F (r, X^{t, \phi}, Y^t (r), Z^t (r), \tilde{U}^{t} (r), R_r^t) \right|^2 dr  \right] + \\
&  + 12 \mathbb{E} \left[   \int_t^{t^\prime}  \left| Z^t (r) \right|^2 dr  \right] + 12 \mathbb{E} \left[   \int_t^{t^\prime} \int_{\mathbb{R}} \left|  U^{t} (r, z)  \right|^2  \nu(dz) dr   \right] \, ,\\
\end{align*}

From the absolute continuity of the Lebesgue integral, we deduce that $$\mathbb{E}\big[\sup_{s\in \left[ t,t^{\prime }\right] }|Y^{t}\left( s\right) -Y^{t}\left( t\right) |^{2}\big] \rightarrow 0  \, ,$$
as $t^{\prime }\rightarrow t$.

Concerning the term $\mathbb{E}\big(\sup_{s\in \left[ t^{\prime },T\right]
}|Y^{t}\left( s\right) -Y^{t^{\prime }}\left( s\right) |^{2}\big)$ let us
denote for short, only throughout this step,%
\begin{equation*}
\begin{array}{c}
\Delta Y\left( r\right) :=Y^{t}\left( r\right) -Y^{t^{\prime }}\left(
r\right) ,\qquad \quad \qquad \Delta Z\left( r\right) :=Z^{t}\left( r\right) -Z^{t^{\prime
}}\left( r\right), \smallskip  \\ \Delta U\left( r,z\right) :=U^{t}\left( r,z\right) -U^{t^{\prime
}}\left( r,z\right), \qquad
\Delta R_r\left( r\right) :=R_r^{t}\left( r\right) -R_r^{t^{\prime }}\left(
r\right)%
\end{array}%
\end{equation*}%
and%
\begin{equation*}
\begin{array}{c}
\Delta h:=h(X^{t,\phi })-h(X^{t^{\prime },\phi }),\smallskip \\ 
\Delta F\left( r\right) :=F(r,X^{t,\phi },Y^{t}\left( r\right) ,Z^{t}\left(
r\right), \tilde{U}^{t}\left(r \right) ,R_{r}^{t}) -F(r,X^{t^{\prime },\phi },Y^{t}\left(
r\right) ,Z^{t}\left( r\right), \tilde{U}^{t}\left( r\right) ,R_{r}^{t}).%
\end{array}%
\end{equation*}%

We apply It\^{o}'s formula to $e^{\beta s}|\Delta Y\left( s\right) |^{2}$ and we derive, for any $\beta >0$ and any $s\in \left[
t^{\prime },T\right] ,$%
\begin{equation*}
\begin{array}{l}
\displaystyle e^{\beta s}|\Delta Y\left( s\right) |^{2}+\beta
\int_{s}^{T}e^{\beta r}|\Delta Y\left( r\right) |^{2}dr+\int_{s}^{T}e^{\beta
r}|\Delta Z\left( r\right) |^{2}dr + \int_{s}^{T} \int_{\mathbb{R}}e^{\beta
r}|\Delta U \left( r,z\right) |^{2} \nu(dz) dr \smallskip \\ 
\displaystyle=e^{\beta T}|\Delta Y\left( T\right)
|^{2}-2\int_{s}^{T}e^{\beta r} \Delta Y\left( r\right) \Delta
Z\left( r\right) \cdot dW\left( r\right)  -2\int_{s}^{T} \int_{\mathbb{R}} e^{\beta r} \Delta Y\left( r\right) \Delta
U\left( r, z\right)  \tilde{N} (dr,dz) \smallskip \\ 
\displaystyle +2\int_{s}^{T}e^{\beta r}  \Delta Y\left( r\right)
\left( F(r,X^{t,\phi }Y^{t}\left( r\right) ,Z^{t}\left( r\right)
, \tilde{U}^{t}\left( r \right), R_{r}^{t})  -F(r,X^{t^{\prime },\phi },Y^{t^{\prime }}\left(
r\right) ,Z^{t^{\prime }}\left( r\right) , \tilde{U}^{t^{\prime }}\left( r \right) , R_{r}^{t^{\prime
}}) \right) dr \smallskip  \\
\displaystyle\;\;  - \int_{s}^{T} \int_{\mathbb{R}} e^{\beta
r}|\Delta U \left( r,z\right) |^{2} \tilde{N} (dr,dz) \,. \smallskip \\%
\end{array}%
\end{equation*}%

We note that the following estimate
\begin{equation*}
\begin{array}{l}
\displaystyle\int_{s}^{T}e^{\beta r}\left(\int_{-\delta }^{0}\left( |\Delta
R\left( r+\theta \right) |^{2} \right)
\alpha (d\theta )\right)dr\smallskip =\int_{-\delta }^{0}\left[ \int_{s}^{T}e^{\beta r}\left(
|\Delta R\left( r+\theta \right) |^{2} \right)  dr\right]\alpha (d\theta )\smallskip \\ 
\displaystyle\leq e^{\beta \delta }\cdot \int_{-\delta }^{0}\alpha (d\theta
)\cdot \int_{0}^{T}e^{\beta r}\big(|\Delta R\left( r\right) |^{2} \big)dr \leq Te^{\beta \delta }\sup_{r\in \left[ 0,T\right] }\big(%
e^{\beta r}|\Delta R\left( r\right) |^{2}\big) \,,%
\end{array}%
\end{equation*}
holds. From assumptions $\mathrm{(A}_{3}\mathrm{)}$--$\mathrm{(A}_{5}\mathrm{)}$, we have for any $a>0,$%
\begin{equation*}
\begin{array}{l}
\displaystyle2\int_{s}^{T}e^{\beta r} \Delta Y\left( r\right)
\left( F(r,X^{t,\phi },Y^{t}\left( r\right) ,Z^{t}\left( r\right), \tilde{U}^{t}\left( r \right)
,R_{r}^{t}) 
-F(r,X^{t^{\prime },\phi },Y^{t^{\prime }}\left(
r\right) ,Z^{t^{\prime }}\left( r\right),     \tilde{U}^{t^{\prime }}\left( r \right) , R_{r}^{t^{\prime
}} \right) dr\smallskip \\ 
\displaystyle\leq a\int_{s}^{T}e^{\beta r}|\Delta Y\left( r\right) |^{2}dr+%
\frac{3}{a}\int_{s}^{T}e^{\beta r}|\Delta F\left( r\right) |^{2}dr \smallskip \\ 
\displaystyle \quad+ \frac{6L^{2}}{a}\int_{s}^{T}e^{\beta r}\left( |\Delta Y\left( r\right)
|^{2}+|\Delta Z\left( r\right) |^{2} + \int_{\mathbb{R}} |\Delta U \left( r,z \right) |^{2} \lambda(z) \nu(dz)  \right) dr\smallskip \\ 
\displaystyle\quad +\frac{3TKe^{\beta \delta }}{a}\sup_{r\in \left[ 0,T%
\right] }\big(e^{\beta r}|\Delta R\left( r\right) |^{2}\big) \, .
\end{array}%
\end{equation*}

Therefore we have 
\begin{equation*}
\begin{array}{l}
\displaystyle e^{\beta s}|\Delta Y\left( s\right) |^{2}+\left( \beta -a-%
\frac{6L^{2}}{a}\right) \int_{s}^{T}e^{\beta r}|\Delta Y\left( r\right)
|^{2}dr \smallskip \\
\displaystyle +\left( 1-\frac{6L^{2}}{a}\right) \int_{s}^{T}e^{\beta r}|\Delta
Z\left( r\right) |^{2}dr +\left( 1-\frac{6L^{2}}{a}\right) \int_{s}^{T} \int_{\mathbb{R}} e^{\beta r}|\Delta
U\left( r,z\right) |^{2} \lambda(z) \nu (dz) dr \smallskip \\ 
\displaystyle\leq e^{\beta T}|\Delta Y\left( T\right) |^{2}+\frac{3}{a}%
\int_{s}^{T}e^{\beta r}|\Delta F\left( r\right)
|^{2}dr \smallskip \\
\displaystyle \quad -2\int_{s}^{T}e^{\beta r} \Delta Y\left( r\right)  \Delta
Z\left( r\right) \cdot dW\left( r\right) -2\int_{s}^{T} \int_{\mathbb{R}} e^{\beta r}  \Delta Y\left( r\right) \Delta
U\left( r, z\right)  \tilde{N} (dr,dz)  \smallskip \\ 
\displaystyle\quad +\frac{3TKe^{\beta \delta }}{a}\sup_{r\in \left[ 0,T%
\right] }e^{\beta r}|\Delta R\left( r\right) |^{2}  \, .
\end{array}%
\end{equation*}%
We now choose $\beta ,a>0$ such that%
\begin{equation}
a+\frac{6L^{2}}{a}<\beta \quad \text{and}\quad \frac{6L^{2}}{a}<1,
\label{restriction_parameters}
\end{equation}%
hence we obtain%
\begin{equation}
\begin{array}{l}
\displaystyle\left( 1-\frac{6L^{2}}{a}\right) \mathbb{E} \Bigg[ \int_{s}^{T}e^{\beta
r}|\Delta Z\left( r\right) |^{2}dr +  \int_{s}^{T} \int_{\mathbb{R}}e^{\beta r}|\Delta U \left( r,z\right) |^{2} \lambda(z) \nu (dz)  dr \Bigg] \leq \smallskip \\ 
\displaystyle \quad  \mathbb{E}\big(e^{\beta T}|\Delta
h|^{2}\big)+\frac{3}{a}\mathbb{E}\int_{s}^{T}e^{\beta r}|\Delta F\left(
r\right) |^{2}dr  +\frac{3TKe^{\beta \delta }}{a}\mathbb{E}\big(\sup_{r\in %
\left[ 0,T\right] }e^{\beta r}|\Delta R\left( r\right) |^{2}\big) .%
\end{array}
\label{technical ineq 14}
\end{equation}%
By Burkholder--Davis--Gundy's inequality, we have%
\begin{equation*}
\begin{array}{l}
\displaystyle2\mathbb{E}\left[ \sup_{s\in \left[ t^{\prime },T\right] }\Bigg|%
\int_{s}^{T}e^{\beta r} \Delta Y\left( r\right) \Delta Z\left(
r\right) \cdot dW\left( r\right) \Bigg|\right] \smallskip \\ 
\displaystyle\leq \frac{1}{4}\mathbb{E}\left[\sup_{s\in \left[ t^{\prime },T%
\right] }e^{\beta s}|\Delta Y\left( s\right) |^{2}\right]+144\mathbb{E}%
\int_{t^{\prime }}^{T}e^{\beta r}|\Delta Z\left( r\right) |^{2}dr.%
\end{array}%
\end{equation*}%
and 
\begin{equation*}
\begin{array}{l}
\displaystyle2\mathbb{E}\left[\sup_{s\in \left[ t^{\prime },T\right] }\Bigg|%
\int_{s}^{T} \int_{\mathbb{R}} e^{\beta r} \Delta Y\left( r\right) \Delta U\left(
r\right)  \tilde{N} (dr,dz) \Bigg|\right]\smallskip \\ 
\displaystyle\leq \frac{1}{4}\mathbb{E}\left[\sup_{s\in \left[ t^{\prime },T%
\right] }e^{\beta s}|\Delta Y\left( s\right) |^{2}\right]+144\mathbb{E}%
\int_{t^{\prime }}^{T} \int_{\mathbb{R} }e^{\beta r}|\Delta U\left( r,z\right) |^{2} \lambda(z) \nu (dz) dr.%
\end{array}%
\end{equation*}%
which immediately implies 
\begin{equation*}
\begin{array}{l}
\displaystyle\frac{1}{2}\mathbb{E}\left[ \sup_{s\in \left[ t^{\prime },T\right]
}e^{\beta s}|\Delta Y\left( s\right) |^{2}\right]\leq \mathbb{E}\big(e^{\beta
T}|\Delta h|^{2}\big)+\frac{3}{a}\mathbb{E}\int_{t^{\prime }}^{T}e^{\beta
r}|\Delta F\left( r\right) |^{2}dr\smallskip \\ 
\displaystyle\quad +\frac{3TKe^{\beta \delta }}{a}\mathbb{E}\big(\sup_{r\in %
\left[ 0,T\right] }e^{\beta r}|\Delta R\left( r\right) |^{2}\big) + 144 \, \mathbb{E}\int_{t^{\prime }}^{T}|\Delta Z\left( r\right)
|^{2}dr + \smallskip \\
\displaystyle  \quad + 144 \, \mathbb{E}%
\int_{t^{\prime }}^{T} \int_{\mathbb{R} }e^{\beta r}|\Delta U\left( r,z\right) |^{2} \lambda(z) \nu (dz) dr \,.%
\end{array}%
\end{equation*}%
Hence, we have%
\begin{equation}
\begin{array}{l}
\displaystyle\frac{1}{2}\mathbb{E}\left(\sup_{s\in \left[ t^{\prime },T\right]
}e^{\beta s}|\Delta Y\left( s\right) |^{2}\right)\leq \mathbb{E}\big(e^{\beta
T}|\Delta h|^{2}\big)+\frac{3}{a}C_{1}\mathbb{E}\int_{t^{\prime
}}^{T}e^{\beta r}|\Delta F\left( r\right) |^{2}dr\smallskip \\ 
\displaystyle\quad +\frac{3TKe^{\beta \delta }C_{1}}{a}\,\mathbb{E}\big(%
\sup_{r\in \left[ 0,T\right] }e^{\beta r}|\Delta R\left( r\right) |^{2}\big) \, ,
\smallskip 
\end{array}
\label{technical ineq 15}
\end{equation}%
where%
\begin{equation*}
C_{1}:=1+\frac{144}{1-6L^{2}/a}\,.
\end{equation*}%

Exploiting thus assumptions $\mathrm{(A}_{3}\mathrm{)}$ and $\mathrm{(A}_{5}%
\mathrm{)}$ together with the fact that $X^{\cdot ,\phi }$ is continuous and
bounded, we have%
\begin{equation*}
C_{1}\mathbb{E}\big(e^{\beta T}|\Delta h|^{2}\big)+\frac{3}{a}C_{1}\mathbb{E}%
\int_{t^{\prime }}^{T}e^{\beta r}|\Delta F\left( r\right) |^{2}dr\rightarrow
0\quad \text{as }t^{\prime }\rightarrow t \, .
\end{equation*}%
Since $ R  \in \mathcal{A} $, and therefore
we have 
\begin{equation*}
\mathbb{E}\left[ \sup_{r\in \left[ 0,T\right] }e^{\beta r}|\Delta R\left(
r\right) |^{2}\right]\,\rightarrow 0 \, , 
\end{equation*}%
as $t^{\prime }\rightarrow t$, we have%
\begin{equation}
\mathbb{E}\left[\sup_{s\in \left[ t^{\prime },T\right] }e^{\beta s}|\Delta
Y\left( s\right) |^{2}\right]\rightarrow 0 \, ,  \label{technical ineq 16}
\end{equation}
as $t^{\prime }\rightarrow t$. 

We are left to show that the term $\mathbb{E}\big(\sup_{s\in\left[
t,t^{\prime}\right] }|Y^{t}\left( t\right) -Y^{s}\left( s\right) |^{2}\big)$
is also converging to $0$ as $t^{\prime}\rightarrow t$.

Since the map $t\mapsto Y^{t}\left( t\right) $ is deterministic, we have
from equation (\ref{BSDE iterative 1}),%
\begin{equation*}
\begin{array}{l}
\displaystyle Y^{t}\left( t\right) -Y^{s}\left( s\right) =\mathbb{E}\big[%
Y^{t}\left( t\right) -Y^{s}\left( s\right) \big]\smallskip \\ 
\displaystyle=\mathbb{E}\big[h(X^{t,\phi })-h(X^{s,\phi })\big]+\mathbb{E}%
\int_{t}^{T}F(r,X^{t,\phi },Y^{t}\left( r\right) ,Z^{t}\left( r\right), \tilde{U}^{t}\left( r\right), R_{r}^{t})dr\smallskip \\ 
\displaystyle\quad -\mathbb{E}\int_{s}^{T}F(r,X^{s,\phi },Y^{s}\left(
r\right) ,Z^{s}\left( r\right) , \tilde{U}^{s}\left( r\right) ,R_{r}^{s})dr\smallskip \\ 
\displaystyle=\mathbb{E}\big[h(X^{t,\phi })-h(X^{s,\phi })\big]+\mathbb{E}%
\int_{t}^{s}F(r,X^{t,\phi },Y^{t}\left( r\right) ,Z^{t}\left( r\right)
,   \tilde{U}^{t}\left( r\right), R_{r}^{t})dr\smallskip \\ 
\displaystyle\quad +\mathbb{E}\int_{s}^{T}\big[F(r,X^{t,\phi },Y^{t}\left(
r\right) ,Z^{t}\left( r\right), \tilde{U}^{t}\left( r\right),R_{r}^{t}) \smallskip \\ 
\displaystyle\qquad -F(r,X^{s,\phi
},Y^{s}\left( r\right) ,Z^{s}\left( r\right) , \tilde{U}^{s}\left( r\right) , R_{r}^{s})\big]dr.%
\end{array}%
\end{equation*}%
Using then assumption \textrm{(A}$_{3}$\textrm{)} we have%
\begin{equation*}
\begin{array}{l}
\displaystyle|Y^{t}\left( t\right) -Y^{s}\left( s\right) |\leq \mathbb{E}%
\big|h(X^{t,\phi })-h(X^{s,\phi })\big|+\mathbb{E}\int_{t}^{s}L\left(%
|Y^{t}\left( r\right) |+|Z^{t}\left( r\right) | + \Big| \int_{\mathbb{R}} U^{t}\left( r,z\right) \lambda(z) \nu(dz) \Big| \right)dr\smallskip \\ 
\displaystyle+\sqrt{K\int_{t}^{s}\mathbb{E}\left[ \int_{-\delta }^{0}\left(
|R^{t}\left( r+\theta \right) |^{2} \right) \alpha (d\theta )\right] dr}\cdot \sqrt{s-t}  +\mathbb{E}\int_{t}^{s}\big|F(r,X^{t,\phi },0,0,0,0,0,0)\big|%
dr\smallskip \\ 
\displaystyle+\mathbb{E}\int_{s}^{T}\big|F(r,X^{t,\phi },Y^{t}\left(
r\right) ,Z^{t}\left( r\right) , \tilde{U}^{t}\left( r\right), R_{r}^{t})-F(r,X^{s,\phi
},Y^{t}\left( r\right) ,Z^{t}\left( r\right) , \tilde{U}^{t}\left( r\right) ,R_{r}^{t})\big|%
dr\smallskip \\ 
\displaystyle+\mathbb{E}\int_{s}^{T}L\left(|Y^{t}\left( r\right) -Y^{s}\left(
r\right) |+|Z^{t}\left( r\right) -Z^{s}\left( r\right) | + \Big| \int_{\mathbb{R}}U^{t}\left( r,z\right) -U^{s}\left( r,z\right) \lambda (z) \nu(dz) \Big|\right)dr\smallskip \\ 
\displaystyle+\sqrt{K(T-s)\int_{s}^{T}\mathbb{E}\left[\int_{-\delta
}^{0}\left( |R^{t}\left( r+\theta \right) -R^{s}\left( r+\theta \right)
|^{2} \Big|\right) \alpha (d\theta )\right]dr} \, ,
\end{array}%
\end{equation*}%
and therefore we obtain%
\begin{equation*}
\begin{array}{l}
\displaystyle|Y^{t}\left( t\right) -Y^{s}\left( s\right) | \leq \mathbb{E}\big|h(X^{t,\phi })-h(X^{s,\phi })\big| \smallskip \\ 
\displaystyle+L\sqrt{%
s-t}\sqrt{T\mathbb{E}\sup_{r\in \left[ 0,T\right] }|Y^{t}\left( r\right)|^{2}+\mathbb{E}\int_{0}^{T}|Z^{t}\left( r\right) |^{2}dr +\mathbb{E}\int_{0}^{T} \int_\mathbb{R} |U^{t}\left( r,z \right) |^{2} \lambda(z) \nu (dz) dr}\smallskip \\ 
\displaystyle\quad +\sqrt{K}\sqrt{s-t}\sqrt{T\mathbb{E}\sup_{r\in \left[ 0,T%
\right] }|R^{t}\left( r\right) |^{2} }  +\left( s-t\right) M(1+\mathbb{E}||X^{t,\phi
}||_{T}^{p})\smallskip \\ 
\displaystyle\quad +\mathbb{E}\int_{s}^{T}\big|F(r,X^{t,\phi },Y^{t}\left(
r\right) ,Z^{t}\left( r\right), \tilde{U}^{t}\left( r\right) ,R_{r}^{t})-F(r,X^{s,\phi
},Z^{t}\left( r\right), \tilde{U}^{t}\left( r\right) ,R_{r}^{t})\big|%
dr\smallskip \\ 
\displaystyle +L\sqrt{T-s}\sqrt{T\mathbb{E}\sup_{r\in \left[ s,T\right]
}|Y^{t}\left( r\right) -Y^{s}\left( r\right) |^{2}+\mathbb{E}%
\int_{s}^{T}|Z^{t}\left( r\right) -Z^{s}\left( r\right) |^{2}dr +\mathbb{E}%
\int_{s}^{T}| \int_{\mathbb{R}} U^{t}\left( r,z\right) -U^{s,z}\left( r,z\right) |^{2} \lambda(z) \nu (dz) dr}\smallskip \\ 
\displaystyle +\sqrt{K}\sqrt{T-s}\sqrt{T\mathbb{E}\sup_{r\in \left[ 0,T%
\right] }|R^{t}\left( r\right) -R^{s}\left( r\right) |^{2}}\,.%
\end{array}%
\end{equation*}%
Taking again into account the fact that $ R  \in \mathcal{A}$, previous step and assumptions $\mathrm{(A}_{3}\mathrm{)}
$ and $\mathrm{(A}_{5}\mathrm{)}$, we infer that 
\begin{equation}
\mathbb{E}\left[\sup_{s\in \left[ t,t^{\prime }\right] }|Y^{t}\left( t\right)
-Y^{s}\left( s\right) |\right]\rightarrow 0,\quad \text{as }t^{\prime
}\rightarrow t.  \label{technical ineq 17}
\end{equation}

Concerning the term $\mathbb{E}\int_{0}^{T}|Z^{t}\left( r\right)
-Z^{t^{\prime}}\left( r\right) |^{2}dr$, we see that%
\begin{equation*}
\begin{array}{l}
\displaystyle\mathbb{E}\int_{0}^{T}|Z^{t}\left( r\right)
-Z^{t^{\prime}}\left( r\right) |^{2}dr=\mathbb{E}\int_{0}^{t^{\prime}}|Z^{t}%
\left( r\right) -Z^{t^{\prime}}\left( r\right) |^{2}dr+\mathbb{E}%
\int_{t^{\prime}}^{T}|Z^{t}\left( r\right) -Z^{t^{\prime}}\left( r\right)
|^{2}dr\smallskip \\ 
\displaystyle=\mathbb{E}\int_{t}^{t^{\prime}}|Z^{t}\left( r\right) |^{2}dr+%
\mathbb{E}\int_{t^{\prime}}^{T}|Z^{t}\left( r\right) -Z^{t^{\prime}}\left(
r\right) |^{2}dr\,,%
\end{array}%
\end{equation*}
hence, by (\ref{technical ineq 14}), 
\begin{equation}
\mathbb{E}\int_{0}^{T}|Z^{t}\left( r\right) -Z^{t^{\prime}}\left( r\right)
|^{2}dr\rightarrow0,\quad\text{as }t^{\prime}\rightarrow t.
\label{technical ineq 18}
\end{equation}
Analogously, we can infer that $$\mathbb{E}\int_{0}^{T} \int_{\mathbb{R}}|U^{t}\left( r,z\right)
-U^{t^{\prime}}\left( r,z\right) |^{2} \lambda (z) \nu (dz) dr \rightarrow 0 \, , $$
as $ t^{\prime} \rightarrow t$. 

\medskip

\noindent\textbf{\textit{Step II.}}

{\em Step 2.} We are going to prove that $\Gamma$ is a contraction on $\mathcal{A}$  with respect to the norm%

\begin{equation*}
\displaystyle||Y||_{\mathcal{A}} := \left( \sup_{t\in\left[ 0,T\right] }\mathbb{E}\left[%
\sup_{r\in\left[ 0,T\right] }e^{\beta r}|Y^{t}\left( r\right) |^{2}\right] \right)^{1/2}%
,\smallskip
\end{equation*}

Let us recall that $\Gamma:\mathcal{A}\rightarrow \mathcal{A%
}$ is defined by $\Gamma\left( R \right) =
Y  $ being $Y$ the process coming from the solution of the BSDE (\ref%
{BSDE iterative 1}).

Let us consider $ R^{1},  R^{2} \in%
\mathcal{A}$ and $ Y^{1} :=\Gamma\left(
R^{1}\right) $, $Y^{2} :=\Gamma\left(
R^{2}\right) $. For the sake of brevity, we will denote in what follows%
\begin{equation*}
\begin{array}{c}
\Delta F^{t}\left( r\right) :=F(r,X^{t,\phi},Y^{1,t}\left( r\right)
,Z^{1,t}\left( r\right), \tilde{U}^{1,t}\left( r\right) ,R_{r}^{1,t})- F(r,X^{t,\phi},Y^{2,t}\left( r\right) ,Z^{2,t}\left(
r\right), \tilde{U}^{2,t}\left(
r\right) ,R_{r}^{2,t}),\smallskip \\ 
\Delta R^{t}\left( r\right) :=R^{1,t}\left( r\right) -R^{2,t}\left( r\right), \quad \quad
\Delta Y^{t}\left( r\right) :=Y^{1,t}\left( r\right) -Y^{2,t}\left( r\right)
\smallskip \\  \Delta Z^{t}\left( r\right) :=Z^{1,t}\left( r\right) -Z^{2,t}\left(
r\right) , \quad \quad\Delta U^{t}\left( r\right) :=U^{1,t}\left( r\right) -U^{2,t}\left(
r\right) .%
\end{array}%
\end{equation*}

Proceeding as in \textbf{\textit{Step I}}, we have from It\^{o}'s formula,
for any $s\in \left[ t,T\right] $ and $\beta >0$,%
\begin{equation}
\begin{array}{l}
\displaystyle e^{\beta s}|\Delta Y^{t}\left( s\right) |^{2}+\beta
\int_{s}^{T}e^{\beta r}|\Delta Y^{t}\left( r\right)
|^{2}dr+\int_{s}^{T}e^{\beta r}|\Delta Z^{t}\left( r\right) |^{2}dr  + \int_{s}^{T} \int_{\mathbb{R}}e^{\beta
r}|\Delta U^t \left( r,z\right) |^{2} \lambda(z) \nu(dz) dr \smallskip 
\\ 
\displaystyle=2\int_{s}^{T}e^{\beta r} \Delta Y^{t}\left( r\right)
\Delta F^{t}\left( r\right)  dr-2\int_{s}^{T}e^{\beta r}
\Delta Y^{t}\left( r\right) \Delta Z^{t}\left( r\right) \cdot  dW\left(
r\right) \smallskip \\
\displaystyle \quad -2\int_{s}^{T} \int_{\mathbb{R}} e^{\beta r} \Delta Y^t \left( r\right) \Delta
U^t \left( r, z\right)  \tilde{N} (dr,dz)     \,. %
\end{array}
\label{technical ineq 1}
\end{equation}

Noticing that it holds 
\begin{equation*}
\begin{array}{l}
\displaystyle\frac{2K}{a}\int_{s}^{T}e^{\beta r}\left(\int_{-\delta}^{0}\left(%
\left\vert \Delta R^{t}(r+\theta)\right\vert ^{2} \right)\alpha(d\theta)\right)dr \leq\frac{2K}{a}\int_{-\delta}^{0}\left(\int_{s}^{T}e^{\beta r}%
\left(\left\vert \Delta R^{t}(r+\theta)\right\vert ^{2} \right)dr \right)\alpha(d\theta)\smallskip \\ 
\displaystyle\leq\frac{2K}{a}\int_{-\delta}^{0}\left(\int_{s+r}^{T+r}e^{\beta%
\left( r^{\prime}-\theta\right) }\left(\left\vert \Delta
R^{t}(r^{\prime})\right\vert ^{2} \right)dr^{\prime}\right)\alpha(d\theta) \leq \frac{2K}{a}\int_{-\delta}^{0}e^{-\beta\theta}\alpha
(d\theta)\cdot\int_{s-\delta}^{T}e^{\beta r}\left(\left\vert \Delta
R^{t}(r)\right\vert ^{2} \Big \vert\right)%
dr\smallskip \\ 
\displaystyle\leq\frac{2Ke^{\beta\delta}}{a}\int_{s-\delta}^{T}e^{\beta r}%
\left(\left\vert \Delta R^{t}(r)\right\vert ^{2} \right)dr.%
\end{array}%
\end{equation*}
we immediately have, from assumptions $\mathrm{(A}_{4}\mathrm{)}$--$\mathrm{%
(A}_{6}\mathrm{)}$, that for any $a>0$, 
\begin{equation}
\begin{array}{l}
\displaystyle2\Bigg|\int_{s}^{T}e^{\beta r} \Delta Y^{t}\left( r\right)
\Delta F^{t}\left( r\right)  dr\Bigg|\leq2\int_{s}^{T}e^{\beta
r}| \Delta Y^{t}\left( r\right) ,\Delta F^{t}\left( r\right)
|dr\smallskip \\ 
\displaystyle\leq a\int_{s}^{T}e^{\beta r}|\Delta Y^{t}\left( r\right) |^{2}+%
\frac{1}{a}\int_{s}^{T}e^{\beta r}|\Delta F^{t}\left( r\right)
|^{2}dr\smallskip \\ 
\displaystyle\leq a\int_{s}^{T}e^{\beta r}|\Delta Y^{t}\left( r\right) |^{2}+%
\frac{2}{a}\int_{s}^{T}e^{\beta r}L^{2}\left( |\Delta Y^{t}\left( r\right)
|+|\Delta Z^{t}\left( r\right) + \Big\vert \int_{\mathbb{R}} \Delta U^{t} \left( r ,z\right) \lambda (z) \nu(dz)  \Big\vert \right) ^{2}dr\smallskip \\ 
\displaystyle\quad+\frac{2}{a}\int_{s}^{T}e^{\beta r}\left(K\int_{-\delta}^{0}%
\left(\left\vert \Delta R^{t}(r+\theta)\right\vert ^{2} \right)\alpha(d\theta)\right)dr\smallskip \\ 
\displaystyle\leq a\int_{s}^{T}e^{\beta r}|\Delta Y^{t}\left( r\right) |^{2}+%
\frac{4L^{2}}{a}\int_{s}^{T}e^{\beta r}\left(|\Delta Y^{t}\left( r\right)
|^{2}+|\Delta Z^{t}\left( r\right) |^{2} + \Big\vert \int_{\mathbb{R}} \Delta U^{t} \left( r ,z\right) \lambda (z) \nu(dz)  \Big\vert^2 \right)dr\smallskip \\ 
\displaystyle\quad+\frac{2Ke^{\beta\delta}}{a}\int_{s-\delta}^{T}e^{\beta r}%
\left(\left\vert \Delta R^{t}(r)\right\vert ^{2}  \right)dr\,.%
\end{array}
\label{technical ineq 2}
\end{equation}

Therefore equation (\ref{technical ineq 1}) yields%
\begin{equation}
\begin{array}{l}
\displaystyle e^{\beta s}|\Delta Y^{t}\left( s\right) |^{2}+\left( \beta -a-%
\frac{4L^{2}}{a}\right) \int_{s}^{T}e^{\beta r}|\Delta Y^{t}\left( r\right)
|^{2}dr+\left( 1-\frac{4L^{2}}{a}\right) \int_{s}^{T}e^{\beta r}|\Delta
Z^{t}\left( r\right) |^{2}dr \smallskip \\ 
\displaystyle \quad + \left( 1-\frac{4L^{2}}{a}\right)  \int_{s}^{T} \int_{\mathbb{R}_0}e^{\beta
r}|\Delta U^t \left( r,z\right) |^{2} \lambda(z) \nu(dz) dr \smallskip 
\\ 
\displaystyle \leq \frac{2Ke^{\beta \delta }}{a}T\sup_{r\in \left[ 0,T\right]
}e^{\beta r}|\Delta R^{t}\left( r\right) |^{2} -2\int_{s}^{T}e^{\beta r} \Delta Y^{t}\left(
r\right) \Delta Z^{t}\left( r\right) \cdot  dW\left( r\right)  \smallskip \\ 
\displaystyle\quad -2\int_{s}^{T} \int_{\mathbb{R}_0} e^{\beta r} \Delta Y^t \left( r\right) ,\Delta
U^t \left( r, z\right)  \tilde{N} (dr,dz)   .%
\end{array}
\label{technical ineq 19}
\end{equation}%

Let now $\beta ,a>0$ satisfying%
\begin{equation}
\beta >a+\frac{4L^{2}}{a}\quad \text{and}\quad 1>\frac{4L^{2}}{a}\,,
\label{restriction_parameters2}
\end{equation}%
we have%
\begin{equation}
\begin{array}{l}
\displaystyle\left( 1-\frac{4L^{2}}{a}\right) \mathbb{E}\int_{s}^{T}e^{\beta
r}|\Delta Z^{t}\left( r\right) |^{2}dr  + \left( 1-\frac{4L^{2}}{a}\right)  \mathbb{E} \int_s^T \int_{\mathbb{R}_0} e^{\beta r} | \Delta U^{t} (r, z)|^2  \lambda (z) \nu(dz) dr  \smallskip \\ 
\displaystyle \quad
\leq \frac{2TKe^{\beta \delta }}{a}\mathbb{E}\left(\sup_{r\in %
\left[ 0,T\right] }e^{\beta r}|\Delta R^{t}\left( r\right) |^{2}\right) 
\end{array}
\label{technical ineq 3}
\end{equation}

%
%

Exploiting now Burkholder--Davis--Gundy's inequality, we have%
\begin{equation*}
\begin{array}{l}
\displaystyle2\mathbb{E}\left[\sup_{s\in \left[ t,T\right] }\Big|%
\int_{s}^{T}e^{\beta r} \Delta Y^{t}\left( r\right) \Delta
Z^{t}\left( r\right) \cdot dW\left( r\right) \Big|\right]\smallskip \\ 
\displaystyle\leq \frac{1}{4}\mathbb{E}\left[ \sup_{s\in \left[ t,T\right]
}e^{\beta s}|\Delta Y^{t}\left( s\right) |^{2}\right]+144\mathbb{E}%
\int_{t}^{T}e^{\beta r}|\Delta Z^{t}\left( r\right) |^{2}dr\,,%
\end{array}%
\end{equation*}%

and, analogously, 

\begin{equation*}
\begin{array}{l}
\displaystyle2\mathbb{E}\left[\sup_{s\in \left[ t^{\prime },T\right] }\Big|%
\int_{s}^{T} \int_{\mathbb{R}} e^{\beta r} \Delta Y^t \left( r\right) \Delta U^t \left(
r\right)  \tilde{N} (dr,dz) \Big|\right] \smallskip \\ 
\displaystyle\leq \frac{1}{4}\mathbb{E}\left[\sup_{s\in \left[ t,T%
\right] }e^{\beta s}|\Delta Y^t \left( s\right) |^{2}\right]+144 \mathbb{E}%
\int_{t}^{T} \int_{\mathbb{R}\setminus \{0\} }e^{\beta r}|\Delta U^t \left( r,z\right) |^{2} \lambda(z) \nu (dz) dr.%
\end{array}%
\end{equation*}%
which implies 
\begin{equation*}
\begin{array}{l}
\displaystyle\mathbb{E}\left[\sup_{s\in \left[ t,T\right] }e^{\beta s}|\Delta
Y^{t}\left( s\right) |^{2}\right] \leq \frac{2Ke^{\beta \delta }}{a}T\mathbb{E}%
\big(\sup_{s\in \left[ 0,T\right] }e^{\beta s}|\Delta R^{t}\left( s\right)
|^{2}\big) + \smallskip \\ 
\displaystyle\quad  +2\mathbb{E}\left[\sup_{s\in \left[ t,T\right] }\Big|%
\int_{s}^{T}e^{\beta r} \Delta Y^{t}\left( r\right) \Delta
Z^{t}\left( r\right) \cdot dW\left( r\right) \Big|\right]  +  \smallskip \\ 
\displaystyle\quad + 2\mathbb{E}\left[\sup_{s\in \left[ t^{\prime },T\right] }\Big|%
\int_{s}^{T} \int_{\mathbb{R}} e^{\beta r} \Delta Y^t \left( r\right) \Delta U^t \left(
r\right)  \tilde{N} (dr,dz) \Big|\right]  \smallskip \\ 
\displaystyle \leq \frac{2Ke^{\beta \delta }}{a}T\mathbb{E}\left[\sup_{s\in %
\left[ 0,T\right] }e^{\beta s}|\Delta R^{t}\left( s\right) |^{2}\right] +
\smallskip \\ 
\displaystyle \quad +\frac{1}{4}\mathbb{E}\left[\sup_{s\in \left[ t,T\right]
}e^{\beta s}|\Delta Y^{t}\left( s\right) |^{2}\right]+144\mathbb{E}%
\int_{t}^{T}e^{\beta r}|\Delta Z^{t}\left( r\right) |^{2}dr \smallskip \\ 
\displaystyle \quad 
+ \frac{1}{4}\mathbb{E}\left[\sup_{s\in \left[ t^,T%
\right] }e^{\beta s}|\Delta Y^t \left( s\right) |^{2}\right]+144 \mathbb{E}%
\int_{t}^{T} \int_{\mathbb{R}_0}e^{\beta r}|\Delta U^t \left( r,z\right) |^{2} \lambda(z) \nu (dz) dr
\,.%
\end{array}%
\end{equation*}%
Hence, we have%
\begin{equation}
\begin{array}{l}
\displaystyle\mathbb{E}\left[\sup_{s\in \left[ t,T\right] }e^{\beta s}|\Delta
Y^{t}\left( s\right) |^{2}\right] \leq \frac{4TKe^{\beta \delta }}{a}C_{1}\mathbb{E}\left[%
\sup_{s\in \left[ 0,T\right] }e^{\beta s}|\Delta R^{t}\left( s\right] |^{2}%
\right]
\end{array}
\label{technical ineq 5}
\end{equation}%
where we have denoted by $C_{1}:=1+\frac{144}{1-4L^{2}/a}\,.\smallskip $

Let us now consider the term $\mathbb{E}\big(\sup_{s\in \left[ 0,t\right]
}e^{\beta s}|\Delta Y\left( s\right) |^{2}\big)$. From equation (\ref{BSDE
iterative 1}), we see that, 
\begin{equation}
\begin{array}{l}
\displaystyle\mathbb{E}\big(\sup_{s\in \left[ 0,t\right] }e^{\beta s}|\Delta
Y^{t}\left( s\right) |^{2}\big)=\mathbb{E}\big(\sup_{s\in \left[ 0,t\right]
}e^{\beta s}|Y^{1,t}(s)-Y^{2,t}(s)|^{2}\big)\smallskip \\ 
\displaystyle=\mathbb{E}\big(\sup_{s\in \left[ 0,t\right] }e^{\beta
s}|Y^{1,s}(s)-Y^{2,s}(s)|^{2}\big)=\sup_{s\in \left[ 0,t\right] }e^{\beta
s}|\Delta Y^{s}\left( s\right) |^{2}=\sup_{s\in \left[ 0,t\right] }\mathbb{E}%
\big(e^{\beta s}|\Delta Y^{s}\left( s\right) |^{2}\big)%
\end{array}
\label{technical ineq 6}
\end{equation}%
so that, exploiting It\^{o}'s formula and proceeding as above, we obtain%
\begin{equation}
\displaystyle\mathbb{E}\big(e^{\beta s}|\Delta Y^{s}\left( s\right) |^{2}%
\big)\leq \frac{2TKe^{\beta \delta }}{a}\mathbb{E}\left[ \sup_{r\in \left[ 0,T%
\right] }e^{\beta r}|\Delta R^{s}\left( r\right) |^{2}\right]   \, .  \label{technical ineq 6'}
\end{equation}%
Thus from inequalities (\ref{technical ineq 3}--\ref{technical ineq 6'}) we
obtain%
\begin{equation*}
\begin{array}{l}
\displaystyle\mathbb{E}\left[\sup_{s\in \left[ 0,T\right] }e^{\beta r}|\Delta
Y^{t}\left( s\right) |^{2}\right]+\mathbb{E}\int_{0}^{T}e^{\beta r}|\Delta
Z^{t}\left( r\right) |^{2}dr + \mathbb{E} \int_{0}^{T} \int_{\mathbb{R}}e^{\beta
r}|\Delta U^t \left( r,z\right) |^{2} \lambda(z) \nu(dz) dr \smallskip \\ 
\displaystyle\leq \frac{4TKe^{\beta \delta }}{a}C_{1}\mathbb{E}\left[%
\sup_{s\in \left[ 0,T\right] }e^{\beta s}|\Delta R^{t}\left( s\right) |^{2}%
\right]  +\frac{2TKe^{\beta \delta }}{a\left( 1-4L^{2}/a\right) }%
\mathbb{E}\left[ \sup_{r\in \left[ 0,T\right] }e^{\beta r}|\Delta R^{t}\left(
r\right) |^{2}\right] \smallskip
\\ 

\displaystyle\quad +\frac{2TKe^{\beta \delta }}{a} \sup_{s\in \left[ 0,t%
\right] }\mathbb{E}\left[\sup_{r\in \left[ 0,T\right] }e^{\beta r}|\Delta
R^{s}\left( r\right) |^{2}\right] \, .
\end{array}%
\end{equation*}%
Them, passing to the supremum for $t\in \left[ 0,T\right] $ we get%
\begin{equation*}
\begin{array}{l}
||Y^{1}-Y^{2}||_{\mathcal{A}}^{2}  \leq \frac{%
2Ke^{\beta \delta }}{a}\Big(3+\frac{289}{1-4L^{2}/a}\Big)\max \left\{
1,T\right\} \Big[||R^{1}-R^{2}||_{\mathcal{A}}^{2}\Big].
\end{array}%
\end{equation*}%
By choosing now $a:=\frac{4L^{2}}{\chi }$ and $\beta $ slightly bigger
than $\chi +\frac{4L^{2}}{\chi }$, condition (\ref%
{restriction_parameters2}) is satisfied and, by restriction $\mathrm{(C)}$
we have%
\begin{equation}
\frac{2Ke^{\beta \delta }}{a}\Big(3+\frac{289}{1-4L^{2}/a}\Big)\max \left\{
1,T\right\} <1\,.  \label{restriction 1}
\end{equation}

Eventually, since $R$ is chosen arbitrarily, it follows that the
application $\Gamma$ is a contraction on $\mathcal{A}$. Therefore, there exists a unique fixed point $\Gamma
(R)=Y \in\mathcal{A}$ and this finishes the proof of
the existence and a uniqueness of a solution to BSDE with delay and driven by Lèvy process, described by Eq. \eqref{backward}.\hfill

\end{proof}
\end{appendix}

\end{document}